\newtheorem{theorem}{Theorem}
\theoremstyle{plain}
\newtheorem{corollary}{Corollary}
\newtheorem{example}{Example}
\newtheorem{lemma}{Lemma}
\numberwithin{equation}{section}
\begin{document}
\title[]{Intra regular Abel-Grassmann's groupoids characterized by their
intuitionistic fuzzy ideals}
\subjclass[2000]{20M10, 20N99}
\author{}
\maketitle

\begin{center}
$^{\ast }$\textbf{Madad Khan and Faisal}

\textbf{Department of Mathematics}

\textbf{COMSATS Institute of Information Technology}

\textbf{Abbottabad, Pakistan.}

$^{\ast }$\textbf{E-mail: madadmath@yahoo.com}, \textbf{E-mail:
yousafzaimath@yahoo.com}
\end{center}

\QTP{Body Math}
$\bigskip $

\textbf{Abstract. }In this paper, we have discussed the properties of
intuitionistic fuzzy ideals of an AG-groupoids. We have characterized an
intra-regular AG-groupoid in terms of intuitionistic fuzzy left (right,
two-sided) ideals, fuzzy (generalized) bi-ideals, intuitionistic fuzzy
interior ideals and intuitionistic fuzzy quasi ideals. We have proved that
the intuitionistic fuzzy left (right, interior, quasi) ideal coincides in an
intra-regular AG-groupoid. We have also shown that the set of intuitionistic
fuzzy two-sided ideals of an intra-regular AG-groupoid forms a semilattice
structure.

\textbf{Keywords. }AG-groupoids, intra-regular AG-groupoids and
intuitionistic fuzzy ideals.

\begin{center}
\bigskip

{\LARGE Introduction}
\end{center}

Given a set $S$, a fuzzy subset of $S$ is an arbitrary mapping $%
f:S\rightarrow \lbrack 0,1]$ where $[0,1]$ is the unit segment of a real
line. This fundamental concept of fuzzy set was first given by Zadeh \cite%
{L.A.Zadeh} in $1965$. Fuzzy groups have been first considered by Rosenfeld 
\cite{A. Rosenfeld} and fuzzy semigroups by Kuroki \cite{N. Kuroki}.

Atanassov \cite{at}, introduced the concept of an intuitionistic fuzzy set.
Dengfeng and Chunfian \cite{8} introduced the concept of the degree of
similarity between intuitionistic fuzzy sets, which may be finite or
continuous, and gave corresponding proofs of these similarity measure and
discussed applications of the similarity measures between intuitionistic
fuzzy sets to pattern recognition problems. Jun in \cite{10}, introduced the
concept of an intuitionistic fuzzy bi-ideal in ordered semigroups and
characterized the basic properties of ordered semigroups in terms of
intuitionistic fuzzy bi-ideals. In \cite{15} and \cite{16}, Kim and Jun
introduced the concept of intuitionistic fuzzy interior ideals of
semigroups. In \cite{20}, Shabir and Khan gave the concept of an
intuitionistic fuzzy interior ideal of ordered semigroups and characterized
different classes of ordered semigroups in terms of intuitionistic fuzzy
interior ideals. They also gave the concept of an intuitionistic fuzzy
generalized bi-ideal in \cite{21} and discussed different classes of ordered
semigroups in terms of intuitionistic fuzzy generalized bi-ideals.

In this paper, we consider the intuitionistic fuzzification of the concept
of several ideals in AG-groupoid and investigate some properties of such
ideals.

An AG-groupoid is a non-associative algebraic structure mid way between a
groupoid and a commutative semigroup. The left identity in an AG-groupoid if
exists is unique \cite{Mus3}. An AG-groupoid is non-associative and
non-commutative algebraic structure, nevertheless, it posses many
interesting properties which we usually find in associative and commutative
algebraic structures. An AG-groupoid with right identity becomes a
commutative monoid \cite{Mus3}. An AG-groupoid is basically the
generalization of semigroup (see \cite{Kaz}) with wide range of applications
in theory of flocks \cite{nas}. The theory of flocks tries to describes the
human behavior and interaction.

The concept of an Abel-Grassmann's groupoid (AG-groupoid) \cite{Kaz} was
first given by M. A. Kazim and M. Naseeruddin in $1972$ and they called it
left almost semigroup (LA-semigroup). P. Holgate call it simple invertive
groupoid \cite{hol}. An AG-groupoid is a groupoid having the left invertive
law%
\begin{equation}
(ab)c=(cb)a\text{, for\ all }a\text{, }b\text{, }c\in S\text{.}  \tag{$1$}
\end{equation}%
In an AG-groupoid, the medial law \cite{Kaz} holds%
\begin{equation}
(ab)(cd)=(ac)(bd)\text{, for\ all }a\text{, }b\text{, }c\text{, }d\in S\text{%
.}  \tag{$2$}
\end{equation}%
In an AG-groupoid $S$ with left identity, the paramedial law holds%
\begin{equation}
(ab)(cd)=(dc)(ba),\text{ for\ all }a,b,c,d\in S.  \tag{$3$}
\end{equation}%
If an AG-groupoid contains a left identity, the following law holds

\begin{equation}
a(bc)=b(ac)\text{, for\ all }a\text{, }b\text{, }c\in S\text{.}  \tag{$4$}
\end{equation}

\begin{center}
\bigskip

{\LARGE Preliminaries}
\end{center}

Let $S$ be an AG-groupoid, by an AG-subgroupoid of $S,$ we means a non-empty
subset $A$ of $S$ such that $A^{2}\subseteq A$.

A non-empty subset $A$ of an AG-groupoid $S$ is called a left (right) ideal
of $S$ if $SA\subseteq A$ $(AS\subseteq A)$.

A non-empty subset $A$ of an AG-groupoid $S$ is called a two-sided ideal or
simply ideal if it is both a left and a right ideal of $S$.

A non empty subset $A$ of an AG-groupoid $S$ is called a generalized
bi-ideal of $S$ if $(AS)A\subseteq A$.

An AG-subgroupoid $A$ of $S$ is called a bi-ideal of $S$ if $(AS)A\subseteq
A $.

A non empty subset $A$ of an AG-groupoid $S$ is called an interior ideal of $%
S$ if $(SA)S\subseteq A$.

A non empty subset $A$ of an AG-groupoid $S$ is called an quasi ideal of $S$
if $AS\cap SA\subseteq A$.

\bigskip

A fuzzy subset $f$ is a class of objects with a grades of membership having
the form

\begin{center}
$f=\{(x,$ $f(x))/x\in S\}.$
\end{center}

An intuitionistic fuzzy set (briefly, $IFS$) $A$ in a non empty set $S$ is
an object having the form

\begin{center}
$A=\left\{ (x,\mu _{A}(x),\gamma _{A}(x))/x\in S\right\} .$
\end{center}

The functions $\mu _{A}:S\longrightarrow \lbrack 0,1]$ and $\gamma
_{A}:S\longrightarrow \lbrack 0,1]$ denote the degree of membership and the
degree of nonmembership respectively such that for all $x\in S,$ we have

\begin{center}
$0\leq \mu _{A}(x)+\gamma _{A}(x)\leq 1.$
\end{center}

\QTP{Body Math}
For the sake of simplicity, we shall use the symbol $A=(\mu _{A},\gamma
_{A}) $ for an $IFS$ $A=\left\{ (x,\mu _{A}(x),\gamma _{A}(x))/x\in
S\right\} .$

\QTP{Body Math}
Let $\delta =\left\{ (x,S_{\delta }(x),\Theta _{\delta }(x))/S_{\delta }(x)=1%
\text{ and }\Theta _{\delta }(x)=0/x\in S\right\} =(S_{\delta },\Theta
_{\delta })$ be an $IFS,$ then $\delta =(S_{\delta },\Theta _{\delta })$
will be carried out in operations with an $IFS$ $A=(\mu _{A},\gamma _{A})$
such that $S_{\delta }$ and $\Theta _{\delta }$ will be used in
collaboration with $\mu _{A}$ and $\gamma _{A}$ respectively.

An $IFS$ $A=(\mu _{A},\gamma _{A})$ of an AG-groupoid $S$ is called an
intuitionistic fuzzy AG-subgroupoid of $S$ if $\mu _{A}(xy)\geq \mu
_{A}(x)\wedge \mu _{A}(y)$ and $\gamma _{A}(xy)\leq \gamma _{A}(x)\vee
\gamma _{A}(y)$ for all $x$, $y\in S.$

An $IFS$ $A=(\mu _{A},\gamma _{A})$ of an AG-groupoid $S$ is called an
intuitionistic fuzzy left ideal of $S$ if $\mu _{A}(xy)\geq \mu _{A}(y)$ and 
$\gamma _{A}(xy)\leq \gamma _{A}(y)$ for all $x$, $y\in S.$

An $IFS$ $A=(\mu _{A},\gamma _{A})$ of an AG-groupoid $S$ is called an
intuitionistic fuzzy right ideal of $S$ if $\mu _{A}(xy)\geq \mu _{A}(x)$
and $\gamma _{A}(xy)\leq \gamma _{A}(x)$ for all $x$, $y\in S.$

An $IFS$ $A=(\mu _{A},\gamma _{A})$ of an AG-groupoid $S$ is called an
intuitionistic fuzzy two-sided ideal of $S$ if it is both an intuitionistic
fuzzy left and an intuitionistic fuzzy right ideal of $S$.

An $IFS$ $A=(\mu _{A},\gamma _{A})$ of an AG-groupoid $S$ is called an
intuitionistic fuzzy generalized bi-ideal of $S$ if $\mu _{A}((xa)y)\geq \mu
_{A}(x)\wedge \mu _{A}(y)$ and $\gamma _{A}((xa)y)\leq \gamma _{A}(x)\vee
\gamma _{A}(y)$ for all $x$, $a$ and $y\in S$.

An intuitionistic fuzzy AG-subgroupoid $A=(\mu _{A},\gamma _{A})$ of an
AG-groupoid $S$ is called an intuitionistic fuzzy bi-ideal of $S$ if $\mu
_{A}((xa)y)\geq \mu _{A}(x)\wedge \mu _{A}(y)$ and $\gamma _{A}((xa)y)\leq
\gamma _{A}(x)\vee \gamma _{A}(y)$ for all $x$, $a$ and $y\in S$.

An $IFS$ $A=(\mu _{A},\gamma _{A})$ of an AG-groupoid $S$ is called an
intuitionistic fuzzy interior ideal of $S$ if $\mu _{A}((xa)y)\geq \mu
_{A}(a)$ and $\gamma _{A}((xa)y)\leq \gamma _{A}(a)$ for all $x$, $a$ and $%
y\in S$.

An $IFS$ $A=(\mu _{A},\gamma _{A})$ of an AG-groupoid $S$ is called an
intuitionistic fuzzy quasi ideal of $S$ if $(\mu _{A}\circ S)\cap (S\circ
\mu _{A})\subseteq \mu _{A}$ and $(\gamma _{A}\circ S)\cup (S\circ \gamma
_{A})\supseteq \gamma _{A},$ that is, $(A\circ \delta )\cap (\delta \circ
A)\subseteq A.$

Let $S$ be an AG-groupoid and let $A_{I}=\{A/$ $A\in S\},$ where $A=(\mu
_{A},\gamma _{A})$ be any $IFS$ of $S,$ then $(A_{I},\circ )$ satisfies $%
(1), $ $(2),$ $(3)$ and $(4)$.

An element $a$ of an AG-groupoid $S$ is called an intra-regular if there
exist $x,y\in S$ such that $a=(xa^{2})y$ and $S$ is called an intra-regular
if every element of $S$ is an intra-regular.

\begin{example}
\label{ex}Let $S=\{1,2,3,4,5\}$ be an AG-groupoid with left identity $4$
with the following multiplication table.
\end{example}

\begin{center}
\begin{tabular}{l|lllll}
. & $1$ & $2$ & $3$ & $4$ & $5$ \\ \hline
$1$ & $1$ & $1$ & $1$ & $1$ & $1$ \\ 
$2$ & $1$ & $2$ & $2$ & $2$ & $2$ \\ 
$3$ & $1$ & $2$ & $4$ & $5$ & $3$ \\ 
$4$ & $1$ & $2$ & $3$ & $4$ & $5$ \\ 
$5$ & $1$ & $2$ & $5$ & $3$ & $4$%
\end{tabular}
\end{center}

\QTP{Body Math}
It is easy to see that $S$ is an intra-regular. Define an $IFS$ $A=(\mu
_{A},\gamma _{A})$ of $S$ as follows: $\mu _{A}(1)=1$, $\mu _{A}(2)=$ $\mu
_{A}(3)=$ $\mu _{A}(4)=$ $\mu _{A}(5)=0,$ $\gamma _{A}(1)=0.3,$ $\gamma
_{A}(2)=0.4$ and $\gamma _{A}(3)=\gamma _{A}(4)=\gamma _{A}(5)=0.2,$ then
clearly $A=(\mu _{A},\gamma _{A})$ is an intuitionistic fuzzy two-sided
ideal and also an intuitionistic fuzzy AG-subgroupoid of $S$.

For an $IFS$ $A=(\mu _{A},\gamma _{A})$ of an AG-groupoid $S$ and $\alpha
\in (0,1],$ the set

\begin{center}
$A_{\alpha }=\{x\in S:\mu _{A}(x)\geq \alpha ,$ $\gamma _{A}(x)\leq \alpha
\} $
\end{center}

\QTP{Body Math}
is called an intuitionistic level cut of $A.$

\begin{theorem}
For an AG-groupoid $S,$ the following statements are true$.$
\end{theorem}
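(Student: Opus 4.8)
The natural line of attack is the classical level-set argument, carried out componentwise on the membership function $\mu_A$ and the non-membership function $\gamma_A$, and applied uniformly to each ideal type occurring in the statement. I would prove the equivalence for intuitionistic fuzzy left ideals in full and then remark that the right, two-sided, (generalized) bi-, interior- and quasi-ideal cases follow by exactly the same bookkeeping, the only change being which defining inequality of $A=(\mu_A,\gamma_A)$ one invokes (for instance $\mu_A((xa)y)\ge\mu_A(x)\wedge\mu_A(y)$, $\gamma_A((xa)y)\le\gamma_A(x)\vee\gamma_A(y)$ in the bi-ideal case, or $\mu_A((xa)y)\ge\mu_A(a)$, $\gamma_A((xa)y)\le\gamma_A(a)$ in the interior-ideal case). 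Note that the identities $(1)$--$(4)$ play no role here; they enter only in the later intra-regularity results.

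For the forward direction, assume $A=(\mu_A,\gamma_A)$ is an intuitionistic fuzzy left ideal and fix $\alpha\in(0,1]$ with $A_\alpha\neq\emptyset$. If $s\in S$ and $a\in A_\alpha$, then $\mu_A(a)\ge\alpha$ and $\gamma_A(a)\le\alpha$, whence $\mu_A(sa)\ge\mu_A(a)\ge\alpha$ and $\gamma_A(sa)\le\gamma_A(a)\le\alpha$, so $sa\in A_\alpha$. Thus $SA_\alpha\subseteq A_\alpha$, i.e. $A_\alpha$ is a left ideal of $S$. The two-sided, bi-, interior- and quasi-ideal variants are verified the same way, reading off the corresponding crisp inclusion from the corresponding fuzzy inequality.

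For the converse, suppose that every non-empty level cut $A_\alpha$ is a left ideal of $S$, and argue by contradiction: if the defining inequalities failed, there would be $x,y\in S$ with, say, $\mu_A(xy)<\mu_A(y)$ (the case $\gamma_A(xy)>\gamma_A(y)$ being symmetric). I would then pick a threshold $\alpha\in(0,1]$ strictly between $\mu_A(xy)$ and $\mu_A(y)$, check that $y\in A_\alpha$ while $xy\notin A_\alpha$, and derive a contradiction with $SA_\alpha\subseteq A_\alpha$ (since $y\in A_\alpha$ forces $xy\in SA_\alpha\subseteq A_\alpha$). The one point that needs genuine care --- and the step I expect to be the real obstacle --- is verifying that the chosen witness genuinely belongs to $A_\alpha$: because a single parameter $\alpha$ simultaneously controls the two conditions $\mu_A\ge\alpha$ and $\gamma_A\le\alpha$, one must exploit the constraint $0\le\mu_A(x)+\gamma_A(x)\le 1$ (and, if necessary, treat the $\mu$- and $\gamma$-violations with separately chosen thresholds) to be sure such an $\alpha$ exists in $(0,1]$. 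Once this is settled, every stated equivalence follows and the theorem is complete.
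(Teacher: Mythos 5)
Your forward direction is fine and coincides with the paper's argument: for a fixed $\alpha$, if $x\in A_{\alpha}$ then the defining inequalities of the intuitionistic fuzzy (right, left, bi-, generalized bi-) ideal give $\mu_{A}$ of the relevant product $\geq\alpha$ and $\gamma_{A}$ of it $\leq\alpha$, so $A_{\alpha}$ is a crisp ideal of the corresponding type. The gap is in what you try to do next: the theorem does not assert an equivalence. Parts $(i)$ and $(ii)$ explicitly state that the converse is \emph{not} true in general, and the paragraph beginning ``Conversely'' in the paper is not a proof of the converse but a counterexample: on the AG-groupoid of Example 1 one takes $\mu_{A}(1)=0.4$, $\mu_{A}(2)=0.8$, $\mu_{A}(3)=\mu_{A}(4)=\mu_{A}(5)=0$, $\gamma_{A}(1)=0.4$, $\gamma_{A}(2)=0.3$, $\gamma_{A}(3)=\gamma_{A}(4)=0.9$, $\gamma_{A}(5)=1$ and $\alpha=0.4$; then $A_{\alpha}=\{1,2\}$ is a right (left) ideal and a (generalized) bi-ideal, yet $\mu_{A}(2\cdot 1)=\mu_{A}(1)=0.4\ngeq 0.8=\mu_{A}(2)$, so $A$ is not an intuitionistic fuzzy right (left, bi-) ideal. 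So the converse you set out to prove is precisely the claim the theorem denies.

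Moreover, the obstacle you yourself flag is not a technicality that can be repaired via $0\leq\mu_{A}(x)+\gamma_{A}(x)\leq 1$; it is the reason the converse genuinely fails. With the combined cut $A_{\alpha}=\{x\in S:\mu_{A}(x)\geq\alpha,\ \gamma_{A}(x)\leq\alpha\}$, any element $y$ with $\gamma_{A}(y)>\mu_{A}(y)$ (e.g.\ $\mu_{A}(y)=0.3$, $\gamma_{A}(y)=0.6$, which is consistent with $\mu_{A}(y)+\gamma_{A}(y)\leq 1$) lies in no level cut at all, so the level cuts impose no constraint on the fuzzy inequalities at such points and your witness $y\in A_{\alpha}$ need not exist for any admissible threshold. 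There is also a quantifier mismatch: the hypothesis in the statement concerns one given $A_{\alpha}$, while your contradiction argument needs every level cut to be an ideal; and even under that stronger hypothesis the equivalence still fails for the reason just given. The correct completion of the proof is the paper's: keep your forward argument, then exhibit the explicit counterexample above (and, for $(ii)$, note $\mu_{A}((2\cdot 1)\cdot 2)\ngeq\mu_{A}(2)$ analogously) to show the converse fails.
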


$(i)$ $A_{\alpha }$ is a right (left, two-sided) ideal of $S$ if $A$ is an
intuitionistic fuzzy right (left) ideal of $S$ but the converse is not true
in general.

$(ii)$ $A_{\alpha }$ is a bi-(generalized bi-) ideal of $S$ if $A$ is an
intuitionistic fuzzy bi-(generalized bi-) ideal of $S$ but the converse is
not true in general.

\begin{proof}
$(i)$: Let $S$ be an AG-groupoid and let $A$ be an intuitionistic fuzzy
right ideal of $S.$ If $x,y\in S\ $such that $x\in A_{\alpha },$ then $\mu
_{A}(x)\geq \alpha $ and $\gamma _{A}(x)\leq \alpha $ therefore $\mu
_{A}(xy)\geq \mu _{A}(x)\geq \alpha $ and $\gamma _{A}(xy)\leq \gamma
_{A}(x)\leq \alpha $. Thus $xy\in A_{\alpha },$ which shows that $A_{\alpha
} $ is a right ideal of $S.$ Let $y\in A_{\alpha },$ then $\mu _{A}(y)\geq
\alpha $ and $\gamma _{A}(y)\leq \alpha .$ If $A$ is an intuitionistic fuzzy
left ideal of $S,$ then $\mu _{A}(xy)\geq \mu _{A}(y)\geq \alpha $ and $%
\gamma _{A}(xy)\leq \gamma _{A}(y)\leq \alpha $ implies that $xy\in
A_{\alpha },$ which shows that $A_{\alpha }$ is a left ideal of $S.$

Conversely, let us define an $IFS$ $A=(\mu _{A},\gamma _{A})$ of an
AG-groupoid $S$ in Example \ref{ex} as follows: $\mu _{A}(1)=0.4,$ $\mu
_{A}(2)=0.8,$ $\mu _{A}(3)=\mu _{A}(4)=\mu _{A}(5)=0,$ $\gamma _{A}(1)=0.4,$ 
$\gamma _{A}(2)=0.3,$ $\gamma _{A}(3)=\gamma _{A}(4)=0.9$ and $\gamma
_{A}(5)=1.$ Let $\alpha =0.4,$ then it is easy to see that $A_{\alpha
}=\{a,b\}$ and one can easily verify from Example \ref{ex} that $\{a,b\}$ is
a right (left) ideal of $S$ but $\mu _{A}(21)\ngeq \mu _{A}(2)$ $(\gamma
_{A}(21)\nleq \gamma _{A}(2))$ and $\mu _{A}(12)\ngeq \mu _{A}(2)$ $(\gamma
_{A}(12)\nleq \gamma _{A}(2))$ implies that $A$ is not an intuitionistic
fuzzy right (left) ideal of $S.$

$(ii)$: Let $S$ be an AG-groupoid and let $A$ be an intuitionistic fuzzy
bi-(generalized bi-) ideal of $S.$ If $x,y$ and $z\in S$ such that $x$ and $%
z\in A_{\alpha },$ then $\mu _{A}(x)\geq \alpha ,$ $\gamma _{A}(x)\leq
\alpha $, $\mu _{A}(z)\geq \alpha $ and $\gamma _{A}(z)\leq \alpha $.
Therefore $\mu _{A}((xy)z)\geq \mu _{A}(x)\wedge \mu _{A}(z)\geq \alpha $
and $\gamma _{A}((xy)z)\leq \gamma _{A}(x)\vee \gamma _{A}(z)\leq \alpha $
implies that $(xy)z\in A_{\alpha }.$ Which shows that $A_{\alpha }$ is a
generalized bi-ideal of $S$. Now let $x,y\in A_{\alpha },$ then $\mu
_{A}(x)\geq \alpha ,$ $\gamma _{A}(x)\leq \alpha ,$ $\mu _{A}(y)\geq \alpha $
and $\gamma _{A}(y)\leq \alpha $. Therefore $\mu _{A}(xy)\geq \mu
_{A}(x)\wedge \mu _{A}(y)\geq \alpha $ and $\gamma _{A}(xy)\leq \gamma
_{A}(x)\vee \gamma _{A}(y)\leq \alpha $ implies that $xy\in A_{\alpha }.$
Thus $A_{\alpha }$ is a bi-ideal of $S$.

Conversely, let us define an $IFS$ $A=(\mu _{A},\gamma _{A})$ of an
AG-groupoid $S$ as in $(i).$ Then it is easy to observe that $A_{\alpha
}=\{a,b\}$ is a bi-(generalized bi-) ideal of $S$ but $\mu _{A}((ba)b)\ngeq
\mu _{A}(b)$ and $\gamma _{A}((ba)b)\nleq \gamma _{A}(b)$ implies that $A$
is not an intuitionistic fuzzy bi-(generalized bi-) ideal of $S$.
\end{proof}

Let $A=(\mu _{A},\gamma _{A})$ and $B=(\mu _{B},\gamma _{B})$ be any two $%
IFSs$ of an AG-groupoid $S$, then the product $A\circ B$ is defined by,

\begin{equation*}
\left( \mu _{A}\circ \mu _{B}\right) (a)=\left\{ 
\begin{array}{c}
\dbigvee\limits_{a=bc}\left\{ \mu _{A}(b)\wedge \mu _{B}(c)\right\} \text{,
if }a=bc\text{ for some }b,\text{ }c\in S. \\ 
0,\text{ otherwise.}%
\end{array}%
\right.
\end{equation*}

\begin{equation*}
\left( \gamma _{A}\circ \gamma _{B}\right) (a)=\left\{ 
\begin{array}{c}
\dbigwedge\limits_{a=bc}\left\{ \gamma _{A}(b)\vee \gamma _{B}(c)\right\} 
\text{, if }a=bc\text{ for some }b,\text{ }c\in S. \\ 
1,\text{ otherwise.}%
\end{array}%
\right.
\end{equation*}

$A\subseteq B$ means that

\begin{center}
$\mu _{A}(x)\leq \mu _{B}(x)$ and $\gamma _{A}(x)\geq \gamma _{B}(x)$ for
all $x$ in $S.$
\end{center}

\begin{lemma}
$($\cite{Mordeson},\cite{mad}$)$ \label{as}Let $S$ be an AG-groupoid$,$ then
the following holds.
\end{lemma}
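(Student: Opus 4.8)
I read the concluding statement as the standard dictionary between the pointwise definitions of the various intuitionistic fuzzy ideals of $S$ and their reformulations via the convolution product $\circ$ and the IFS $\delta=(S_{\delta},\Theta_{\delta})$; concretely, as the list of equivalences: $A$ is an intuitionistic fuzzy AG-subgroupoid of $S$ iff $A\circ A\subseteq A$; an intuitionistic fuzzy left (right, two-sided) ideal iff $\delta\circ A\subseteq A$ (resp. $A\circ\delta\subseteq A$, resp. both); an intuitionistic fuzzy (generalized) bi-ideal iff $(A\circ\delta)\circ A\subseteq A$ (together with $A\circ A\subseteq A$ in the bi-ideal case); and an intuitionistic fuzzy interior ideal iff $(\delta\circ A)\circ\delta\subseteq A$. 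The plan is to prove each equivalence by unwinding the definition of $\circ$ and comparing the membership function $\mu_{A}$ and the nonmembership function $\gamma_{A}$ pointwise, splitting on whether the argument admits a factorization.

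For the "only if" direction, fix $a\in S$. If $a$ has no factorization $a=bc$, the relevant product has value $0$ on the $\mu$-part and $1$ on the $\gamma$-part, so the required inclusion is immediate. If $a=bc$ for some $b,c\in S$, then in the left-ideal case $(S_{\delta}\circ\mu_{A})(a)=\bigvee_{a=bc}\{S_{\delta}(b)\wedge\mu_{A}(c)\}=\bigvee_{a=bc}\mu_{A}(c)\leq\mu_{A}(a)$, since $\mu_{A}(bc)\geq\mu_{A}(c)$ for every such factorization, and dually $(\Theta_{\delta}\circ\gamma_{A})(a)=\bigwedge_{a=bc}\gamma_{A}(c)\geq\gamma_{A}(a)$; hence $\delta\circ A\subseteq A$. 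The right-ideal and AG-subgroupoid cases are identical after using $\mu_{A}(b)$ in place of $S_{\delta}(b)$, or retaining both factors. For the "if" direction I would use the trivial factorization: given $x,y\in S$, from $\delta\circ A\subseteq A$ one gets $\mu_{A}(xy)\geq(S_{\delta}\circ\mu_{A})(xy)\geq S_{\delta}(x)\wedge\mu_{A}(y)=\mu_{A}(y)$ and $\gamma_{A}(xy)\leq(\Theta_{\delta}\circ\gamma_{A})(xy)\leq\Theta_{\delta}(x)\vee\gamma_{A}(y)=\gamma_{A}(y)$, which is exactly the defining inequality for an intuitionistic fuzzy left ideal; the remaining one-sided and the subgroupoid statements go the same way.

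The bi-, generalized bi- and interior ideal clauses follow the same template, but the iterated products make the bookkeeping heavier, and this is the only \emph{real obstacle}. For $(A\circ\delta)\circ A\subseteq A$ one evaluates at an element of the form $(xs)y$ by matching it with the outer factorization $p\cdot y$, $p=xs$, and then the inner factorization $x\cdot s$ of $p$; conversely, for a general factorization $a=uv$ with $u=pq$, one rewrites $(pq)v$ using the left invertive law $(1)$ — and, when a left identity is available, the medial law $(2)$, the paramedial law $(3)$ and law $(4)$ — into the canonical shape $(xs)y$ to which the bi-/generalized bi-ideal inequality applies, and then takes the appropriate sup/inf. The interior-ideal clause is treated identically with $(\delta\circ A)\circ\delta$, comparing with $\mu_{A}(a)$. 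The quasi-ideal equivalence, if it is among the items, is merely the definition rewritten with $(\mu_{A}\circ S_{\delta})\cap(S_{\delta}\circ\mu_{A})$ and $(\gamma_{A}\circ\Theta_{\delta})\cup(\Theta_{\delta}\circ\gamma_{A})$, so it requires no separate work. Collecting the equivalences then completes the proof.
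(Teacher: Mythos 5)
Your argument for what this lemma actually asserts is correct. Note first that the paper gives no proof of Lemma \ref{as} at all: it is quoted from the cited references (\cite{Mordeson}, \cite{mad}), so there is no in-paper proof to compare with, and the lemma's content is only the two listed items, namely that $A$ is an intuitionistic fuzzy AG-subgroupoid iff $\mu_{A}\circ\mu_{A}\subseteq\mu_{A}$ and $\gamma_{A}\circ\gamma_{A}\supseteq\gamma_{A}$, and that $A$ is an intuitionistic fuzzy left (right) ideal iff $S_{\delta}\circ\mu_{A}\subseteq\mu_{A}$ and $\Theta_{\delta}\circ\gamma_{A}\supseteq\gamma_{A}$ (respectively $\mu_{A}\circ S_{\delta}\subseteq\mu_{A}$ and $\gamma_{A}\circ\Theta_{\delta}\supseteq\gamma_{A}$). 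For these two items your proof is the standard and complete one: the non-factorizable case is handled by the conventions $0$ and $1$, the factorizable case by termwise comparison inside the supremum/infimum, and the converse by evaluating the convolution on the tautological factorization $xy=x\cdot y$; the inequality directions for $\mu_{A}$ and $\gamma_{A}$ are all handled consistently with the paper's componentwise ordering.

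Two remarks on scope. The clauses you add about bi-, generalized bi-, interior and quasi-ideals are not part of this lemma (the paper treats those characterizations separately, and for intra-regular AG-groupoids with left identity). Moreover, in your sketch of those clauses the converse ("only if") direction needs no rewriting at all: a factorization $a=(pq)v$ is already of the shape $(xs)y$ to which the pointwise inequality applies, so invoking the laws $(1)$--$(4)$, and in particular a left identity, is both unnecessary and unavailable in an arbitrary AG-groupoid. This does not affect the correctness of your proof of the lemma as actually stated.
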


$(i)$ An $IFS$ $A=(\mu _{A},\gamma _{A})$ is an intuitionistic fuzzy
AG-subgroupoid of $S$ if and only if $\mu _{A}\circ \mu _{A}\subseteq \mu
_{A}$ and $\gamma _{A}\circ \gamma _{A}\supseteq \gamma _{A}.$

$(ii)$ An $IFS$ $A=(\mu _{A},\gamma _{A})$ is intuitionistic fuzzy left
(right) ideal of $S$ if and only if $S\circ \mu _{A}\subseteq \mu _{A}$ and $%
\Theta \circ \gamma _{A}\supseteq \gamma _{A}$ $(\mu _{A}\circ S\subseteq
\mu _{A}$ and $\gamma _{A}\circ \Theta \supseteq \gamma _{A}).$

\begin{theorem}
Let $A=(\mu _{A},\gamma _{A})$ be an $IFS$ of an intra-regular AG-groupoid $%
S $ with left identity$,$ then the following conditions are equivalent.
\end{theorem}

$(i)$ $A=(\mu _{A},\gamma _{A})$ is an intuitionistic fuzzy bi-ideal of $S$.

$(ii)$ $(A\circ \delta )\circ A=A$ and $A\circ A=A,$ where $\delta
=(S_{\delta },\Theta _{\delta }).$

\begin{proof}
$(i)\Longrightarrow (ii):$ Let $A=(\mu _{A},\gamma _{A})$ be an
intuitionistic fuzzy bi-ideal of an intra-regular AG-groupoid $S$ with left
identity. Let $a\in A$, then there exists $x,$ $y\in S$ such that $%
a=(xa^{2})y.$ Now by using $(4),$ $(1),$ $(3)$ and $(2),$ we have%
\begin{eqnarray*}
a &=&(xa^{2})y=(x(aa))y=(a(xa))y=(y(xa))a=(y(x((x(aa))y)))a \\
&=&((ey)(x((a(xa))y)))a=((((a(xa))y)x)(ye))a=(((xy)(a(xa)))(ye))a \\
&=&((a((xy)(xa)))(ye))a=((a(x^{2}(ya)))(ye))a=(((ye)(x^{2}(ya)))a)a \\
&=&(((ye)(x^{2}(y((xa^{2})(ey)))))a)a=(((ye)(x^{2}(y((ye)(a^{2}x)))))a)a \\
&=&(((ye)(x^{2}(y(a^{2}((ye)x)))))a)a=(((ye)(x^{2}(a^{2}(y((ye)x)))))a)a \\
&=&(((ye)(a^{2}(x^{2}(y((ye)x)))))a)a=(((aa)((ye)(x^{2}(y((ye)x)))))a)a \\
&=&((((x^{2}(y((ye)x)))(ye))(aa))a)a=((a(((x^{2}(y((ye)x)))(ye))a))a)a=(pa)a
\end{eqnarray*}

where $p=a(((x^{2}(y((ye)x)))(ye))a).$ Therefore%
\begin{eqnarray*}
((\mu _{A}\circ S_{\delta })\circ \mu _{A})(a)
&=&\dbigvee\limits_{a=(pa)a}\left\{ (\mu _{A}\circ S_{\delta })(pa)\wedge
\mu _{A}(a)\right\} \\
&\geq &\dbigvee\limits_{pa=pa}\left\{ \mu _{A}(p)\circ S_{\delta
}(a)\right\} \wedge \mu _{A}(a) \\
&\geq &\left\{ \mu _{A}(a(((x^{2}(y((ye)x)))(ye))a))\wedge S_{\delta
}(a)\right\} \wedge \mu _{A}(a) \\
&\geq &\mu _{A}(a)\wedge 1\wedge \mu _{A}(a)=\mu _{A}(a)
\end{eqnarray*}

and%
\begin{eqnarray*}
((\gamma _{A}\circ \Theta _{\delta })\circ \gamma _{A})(a)
&=&\dbigwedge\limits_{a=(pa)a}\left\{ (\gamma _{A}\circ \Theta _{\delta
})(pa)\vee \gamma _{A}(a)\right\} \\
&\leq &\dbigwedge\limits_{pa=pa}\left\{ \gamma _{A}(p)\circ \Theta _{\delta
}(a)\right\} \vee \gamma _{A}(a) \\
&\leq &\left\{ \gamma _{A}(a(((x^{2}(y((ye)x)))(ye))a))\vee \Theta _{\delta
}(a)\right\} \vee \gamma _{A}(a) \\
&\leq &\gamma _{A}(a)\vee 1\vee \gamma _{A}(a)=\gamma _{A}(a).
\end{eqnarray*}

This shows that $(\mu _{A}\circ S_{\delta })\circ \mu _{A}\supseteq \mu _{A}$
and $(\gamma _{A}\circ \Theta _{\delta })\circ \gamma _{A}\subseteq \gamma
_{A},$ which implies that $(A\circ \delta )\circ A\supseteq A.$ Now by using 
$(4),$ $(1)$ and $(3),$ we have%
\begin{eqnarray*}
a &=&(xa^{2})y=(x(aa))y=(a(xa))y=(y(xa))a=(y(x((xa^{2})(ey))))a \\
&=&(y(x((ye)(a^{2}x))))a=(y(x(a^{2}((ye)x))))a=(y(a^{2}(x((ye)x))))a \\
&=&((aa)(y(x((ye)x))))a=(((x((ye)x))y)(aa))a=(a((x((ye)x))a))a=(ap)a
\end{eqnarray*}

where $p=((x((ye)x))y)a.$ Therefore%
\begin{eqnarray*}
((\mu _{A}\circ S_{\delta })\circ \mu _{A})(a)
&=&\dbigvee\limits_{a=(ap)a}\left\{ (\mu _{A}\circ S_{\delta })(ap)\wedge
\mu _{A}(a)\right\} \\
&=&\dbigvee\limits_{a=(ap)a}\left( \dbigvee\limits_{ap=ap}\mu _{A}(a)\wedge
S_{\delta }(p)\right) \wedge \mu _{A}(a) \\
&=&\dbigvee\limits_{a=(ap)a}\left\{ \mu _{A}(a)\wedge 1\wedge \mu
_{A}(a)\right\} =\dbigvee\limits_{a=(ap)a}\mu _{A}(a)\wedge \mu _{A}(a) \\
&\leq &\dbigvee\limits_{a=(ap)a}\mu _{A}((a((x((ye)x))a))a)=\mu _{A}(a).
\end{eqnarray*}

This shows that $(\mu _{A}\circ S_{\delta })\circ \mu _{A}\subseteq \mu _{A}$
and similarly we can show that $(\gamma _{A}\circ \Theta _{\delta })\circ
\gamma _{A}\supseteq \gamma _{A},$ which implies that $(A\circ \delta )\circ
A\subseteq A.$ Thus $(A\circ \delta )\circ A=A.$ We have shown that $%
a=((a(((x^{2}(y((ye)x)))(ye))a))a)a.$ Let $a=pa$ where $%
p=(a(((x^{2}(y((ye)x)))(ye))a))a.$ Therefore%
\begin{eqnarray*}
(\mu _{A}\circ \mu _{A})(a) &=&\dbigvee\limits_{a=pa}\left\{ \mu
_{A}((a(((x^{2}(y((ye)x)))(ye))a))a)\wedge \mu _{A}(a)\right\} \\
&\geq &\mu _{A}(a)\wedge \mu _{A}(a)\wedge \mu _{A}(a)=\mu _{A}(a).
\end{eqnarray*}

This shows that $\mu _{A}\circ \mu _{A}\supseteq \mu _{A}$ and similarly we
can show that $\gamma _{A}\circ \gamma _{A}\subseteq \gamma _{A}.$ Now by
using Lemma \ref{as}, we get $A\circ A=A.$

$(ii)\Longrightarrow (i):$ Let $A=(\mu _{A},\gamma _{A})$ be an $IFS$ of an
intra-regular AG-groupoid $S$, then%
\begin{eqnarray*}
\mu _{A}((xa)y) &=&((\mu _{A}\circ S_{\delta })\circ \mu
_{A})((xa)y)=\dbigvee\limits_{(xa)y=(xa)y}\{(\mu _{A}\circ S_{\delta
})(xa)\wedge \mu _{A}(y)\} \\
&\geq &\dbigvee\limits_{xa=xa}\left\{ \mu _{A}(x)\wedge S_{\delta
}(a)\right\} \wedge \mu _{A}(y)\geq \mu _{A}(x)\wedge 1\wedge \mu
_{A}(y)=\mu _{A}(x)\wedge \mu _{A}(z).
\end{eqnarray*}

This shows that $\mu _{A}((xa)y)\geq \mu _{A}(x)\wedge \mu _{A}(z)$ and
similarly we can show that $\gamma _{A}((xa)y)\leq \gamma _{A}(x)\vee \gamma
_{A}(z)$. Also by Lemma \ref{as}, $A$ is an intuitionistic fuzzy
AG-subgroupoid of $S$ and therefore $A$ is an intuitionistic fuzzy bi-ideal
of $S.$
\end{proof}

\begin{theorem}
Let $A=(\mu _{A},\gamma _{A})$ be an $IFS$ of an intra-regular AG-groupoid $%
S $ with left identity$,$ then the following conditions are equivalent.
\end{theorem}

$(i)$ $A=(\mu _{A},\gamma _{A})$ is an intuitionistic fuzzy interior ideal
of $S$.

$(ii)$ $(\delta \circ A)\circ \delta =A,$ where $\delta =(S_{\delta },\Theta
_{\delta }).$

\begin{proof}
$(i)\Longrightarrow (ii):$ Let $A=(\mu _{A},\gamma _{A})$ be an
intuitionistic fuzzy interior ideal of an intra-regular AG-groupoid $S$ with
left identity. Let $a\in A$, then there exists $x,$ $y\in S$ such that $%
a=(xa^{2})y.$ Now by using $(4),$ $(3)$and $(1),$ we have%
\begin{equation*}
a=(x(aa))y=(a(xa))y=((ea)(xa))y=((ax)(ae))y=(((ae)x)a)y.
\end{equation*}

Therefore%
\begin{eqnarray*}
((S_{\delta }\circ \mu _{A})\circ S_{\delta })(a)
&=&\dbigvee\limits_{a=(((ae)x)a)y}\left\{ (S_{\delta }\circ \mu
_{A})(((ae)x)a)\wedge S_{\delta }(y)\right\} \\
&\geq &\dbigvee\limits_{((ae)x)a=((ae)x)a}\left\{ (S_{\delta }((ae)x)\wedge
\mu _{A}(a)\right\} \wedge 1 \\
&\geq &1\wedge \mu _{A}(a)\wedge 1=\mu _{A}(a).
\end{eqnarray*}

This proves that $(S_{\delta }\circ \mu _{A})\circ S_{\delta }\supseteq \mu
_{A}$ and similarly we can show that $(\Theta _{\delta }\circ \gamma
_{A})\circ \Theta _{\delta }\subseteq \gamma _{A}$, therefore $(\delta \circ
A)\circ \delta \supseteq A.$ Now again%
\begin{eqnarray*}
((S_{\delta }\circ \mu _{A})\circ S_{\delta })(a)
&=&\dbigvee\limits_{a=(xa^{2})y}\left\{ (S_{\delta }\circ \mu
_{A})(xa^{2})\wedge S_{\delta }(y)\right\} \\
&=&\dbigvee\limits_{a=(xa^{2})y}\left(
\dbigvee\limits_{xa^{2}=xa^{2}}S_{\delta }(x)\wedge \mu _{A}(a^{2})\right)
\wedge S_{\delta }(y) \\
&=&\dbigvee\limits_{a=(xa^{2})y}\left\{ 1\wedge \mu _{A}(a^{2})\wedge
1\right\} =\dbigvee\limits_{a=(xa^{2})y}\mu _{A}(a^{2}) \\
&\leq &\dbigvee\limits_{a=(xa^{2})y}\mu _{A}((xa^{2})y)=\mu _{A}(a).
\end{eqnarray*}

Thus $(S_{\delta }\circ \mu _{A})\circ S_{\delta }\subseteq \mu _{A}$ and
similarly we can show that $(\Theta _{\delta }\circ \gamma _{A})\circ \Theta
_{\delta }\supseteq \gamma _{A},$ therefore $(\delta \circ A)\circ \delta
\subseteq A.$ Hence it follows that $(\delta \circ A)\circ \delta =A.$

$(ii)\Longrightarrow (i):$ Let $A=(\mu _{A},\gamma _{A})$ be an $IFS$ of an
intra-regular AG-groupoid $S$, then%
\begin{eqnarray*}
\mu _{A}((xa)y) &=&((S_{\delta }\circ \mu _{A})\circ S_{\delta
})((xa)y)=\dbigvee\limits_{(xa)y=(xa)y}\left\{ (S_{\delta }\circ \mu
_{A})(xa)\wedge S_{\delta }(y)\right\} \\
&\geq &\dbigvee\limits_{xa=xa}\left\{ (S_{\delta }(x)\circ \mu
_{A}(a)\right\} \wedge S_{\delta }(y)\geq \mu _{A}(a).
\end{eqnarray*}

Similarly we can show that $\gamma _{A}((xa)y)\leq \gamma _{A}(a)$ and
therefore $A=(\mu _{A},\gamma _{A})$ is an intuitionistic fuzzy interior
ideal of $S.$
\end{proof}

\begin{lemma}
\label{iff}Let $A=(\mu _{A},\gamma _{A})$ be an $IFS$ of an intra-regular
AG-groupoid $S$ with left identity$,$ then $A=(\mu _{A},\gamma _{A})$ is an
intuitionistic fuzzy left ideal of $S$ if and only if $A=(\mu _{A},\gamma
_{A})$ is an intuitionistic fuzzy right ideal of $S.$
\end{lemma}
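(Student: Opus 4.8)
The plan is to prove the equivalence by two symmetric implications, each of which amounts to a short manipulation using the left invertive law $(1)$, the medial law $(2)$, the paramedial law $(3)$, and the left-identity identity $(4)$, together with intra-regularity. First I would recall that since $S$ has a left identity $e$, every element can be written $x = ex$, and I will use this repeatedly to insert the left identity where convenient.

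For the forward direction, assume $A=(\mu_A,\gamma_A)$ is an intuitionistic fuzzy left ideal, so $\mu_A(xy)\geq \mu_A(y)$ and $\gamma_A(xy)\leq \gamma_A(y)$ for all $x,y$. I want to show $\mu_A(xy)\geq \mu_A(x)$ and $\gamma_A(xy)\leq \gamma_A(x)$. The key step is to use intra-regularity of $x$: there exist $u,v\in S$ with $x=(ux^2)v$. Then I rewrite the product $xy$ by substituting this expression for $x$ and shuffling the parentheses with $(1)$--$(4)$ until $xy$ is exhibited in the form $s\cdot x$ for some $s\in S$ (i.e.\ until the whole product is displayed as something times $x$); applying the left-ideal hypothesis then gives $\mu_A(xy)\geq \mu_A(x)$ and $\gamma_A(xy)\leq \gamma_A(x)$, which is exactly the right-ideal condition. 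Concretely I expect something like $xy=((ux^2)v)y = \dots = s(x\cdot t)$ or ultimately $\dots = s\cdot x$ after applying $(4)$ to pull $x$ out, but the exact bracketing chain is the computation to be carried out.

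The reverse direction is handled the same way with the roles of membership/nonmembership and of left/right interchanged: assume $A$ is an intuitionistic fuzzy right ideal, so $\mu_A(xy)\geq\mu_A(x)$ and $\gamma_A(xy)\leq\gamma_A(x)$; use intra-regularity of $y$, write $y=(uy^2)v$, and reshuffle $xy$ via $(1)$--$(4)$ into the form $(\text{something})\cdot y$ — in an AG-groupoid with left identity the paramedial law $(3)$ is precisely what lets one swap a left-hand factor to the right — so that the right-ideal hypothesis yields $\mu_A(xy)\geq\mu_A(y)$ and $\gamma_A(xy)\leq\gamma_A(y)$, the left-ideal condition. Alternatively, and perhaps more cleanly, one can argue at the level of $IFS$-products using Lemma~\ref{as}: a left ideal satisfies $S\circ\mu_A\subseteq\mu_A$, a right ideal satisfies $\mu_A\circ S\subseteq\mu_A$, and in an intra-regular AG-groupoid with left identity one has $S\circ\mu_A = \mu_A\circ S$ (which itself follows by a bracket computation showing $SX=XS$-type behaviour on the relevant products), so the two conditions collapse to one.

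The main obstacle will be finding the precise associativity-rewriting chain: as the earlier proofs in the paper show, these AG-groupoid computations can run to a dozen steps of carefully tracking brackets through $(1)$, $(2)$, $(3)$, $(4)$ and inserting copies of $e$, and it is easy to lose track of which law applies at each stage. I would therefore do the bracket bookkeeping carefully on scratch paper first, aiming to land the target product in the shape $s\cdot x$ (resp.\ $t\cdot y$ up to the appropriate side) in as few moves as possible, and only then write it out. Everything else — passing from the element-wise inequality to the $IFS$ statement, and the symmetry of the two directions — is routine.
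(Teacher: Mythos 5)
Your forward direction is exactly the paper's argument: use intra-regularity of the left factor $a=(xa^{2})y$, rewrite $ab$ until $a$ sits as the \emph{right} factor (the paper lands on $ab=((yb)((ae)x))a$), and then the left-ideal condition, which bounds $\mu _{A}$ of a product by its right factor, gives $\mu _{A}(ab)\geq \mu _{A}(a)$, i.e.\ the right-ideal condition. That half of your plan is sound and coincides with the paper, modulo the bracket chain you defer.

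The converse, as you state it, has the roles of the factors backwards and would not close. You propose to exhibit $xy$ in the form $(\text{something})\cdot y$ and then invoke the right-ideal hypothesis to conclude $\mu _{A}(xy)\geq \mu _{A}(y)$; but the right-ideal condition is $\mu _{A}(pq)\geq \mu _{A}(p)$ --- it bounds by the \emph{left} factor --- so from $xy=s\cdot y$ it only yields $\mu _{A}(xy)\geq \mu _{A}(s)$, which is useless. What you actually need is $y$ (in practice $y^{2}$) appearing as the left factor. This is what the paper does: with $b=(x^{\prime }b^{2})y^{\prime }$ it computes $ab=a((x^{\prime }b^{2})y^{\prime })=(x^{\prime }b^{2})(ay^{\prime })=(y^{\prime }a)(b^{2}x^{\prime })=b^{2}((y^{\prime }a)x^{\prime })$, and then applies the right-ideal condition twice, $\mu _{A}(ab)\geq \mu _{A}(b^{2})=\mu _{A}(bb)\geq \mu _{A}(b)$ (and dually for $\gamma _{A}$). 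So flip your target for the converse and allow the extra pass through $b^{2}$. Your suggested shortcut via Lemma \ref{as} --- claiming $S\circ \mu _{A}=\mu _{A}\circ S$ for an arbitrary $IFS$ in an intra-regular AG-groupoid with left identity --- cannot be leaned on either: that identity is not established by your sketch, and the related fact $\delta \circ A=A=A\circ \delta $ appears in the paper only later (Lemma \ref{aqw}), after the present lemma, so using it here would invert the logical order; the element-wise computation is the intended and safe route.
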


\begin{proof}
Let $S$ be an intra-regular AG-groupoid and let $A=(\mu _{A},\gamma _{A})$
be an intuitionistic fuzzy left ideal of $S.$ Now for $a,b\in S$ there
exists $x,y,x^{^{\prime }},y^{^{\prime }}\in S$ such that $a=(xa^{2})y$ and $%
b=(x^{^{\prime }}b^{2})y^{^{\prime }},$ then by using $(1),$ $(3)$ and $(4),$
we have 
\begin{eqnarray*}
\mu _{A}(ab) &=&\mu _{A}(((xa^{2})y)b)=\mu _{A}((by)(x(aa)))=\mu
_{A}(((aa)x)(yb)) \\
&=&\mu _{A}(((xa)a)(yb))=\mu _{A}(((xa)(ea))(yb))=\mu _{A}(((ae)(ax))(yb)) \\
&=&\mu _{A}((a((ae)x))(yb))=\mu _{A}(((yb)((ae)x))a)\geq \mu _{A}(a).
\end{eqnarray*}

Similarly we can get $\gamma _{A}(ab)\leq \gamma _{A}(a),$ which implies
that $A=(\mu _{A},\gamma _{A})$ is an intuitionistic fuzzy right ideal of $%
S. $

Conversely let $A=(\mu _{A},\gamma _{A})$ be an intuitionistic fuzzy right
ideal of $S.$ Now by using $(4)$ and $(3),$ we have 
\begin{eqnarray*}
\mu _{A}(ab) &=&\mu _{A}(a((x^{^{\prime }}b^{2})y^{^{\prime }})=\mu
_{A}((x^{^{\prime }}b^{2})(ay^{^{\prime }}))=\mu _{A}((y^{^{\prime
}}a)(b^{2}x^{^{\prime }})) \\
&=&\mu _{A}(b^{2}((y^{^{\prime }}a)x))\geq \mu _{A}(b).
\end{eqnarray*}

Also we can get $\gamma _{A}(ab)\leq \gamma _{A}(b),$ which implies that $%
A=(\mu _{A},\gamma _{A})$ is an intuitionistic fuzzy left ideal of $S.$
\end{proof}

An AG-groupoid $S$ is called a left (right) duo if every left (right) ideal
of $S$ is a two-sided ideal of $S$ and is called a duo if it is both a left
and a right duo.

An AG-groupoid $S$ is called an intuitionistic fuzzy left (right) duo if
every intuitionistic fuzzy left (right) ideal of $S$ is an intuitionistic
fuzzy two-sided ideal of $S$ and is called an intuitionistic fuzzy duo if it
is both an intuitionistic fuzzy left and an intuitionistic fuzzy right duo.

\begin{corollary}
Every intra-regular AG-groupoid with left identity is an intuitionistic
fuzzy duo.
\end{corollary}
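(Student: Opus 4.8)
The plan is to read this off directly from Lemma~\ref{iff}. That lemma already establishes, for an intra-regular AG-groupoid $S$ with left identity, that an $IFS$ $A=(\mu_A,\gamma_A)$ is an intuitionistic fuzzy left ideal of $S$ if and only if it is an intuitionistic fuzzy right ideal of $S$. The corollary is just a repackaging of this equivalence in the language of the duo property, so no new computation is needed; the point is simply to invoke \emph{both} directions of the lemma.

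Concretely, I would check the two conditions defining an intuitionistic fuzzy duo. For the left duo part, let $A=(\mu_A,\gamma_A)$ be an arbitrary intuitionistic fuzzy left ideal of $S$. Since $S$ is an intra-regular AG-groupoid with left identity, Lemma~\ref{iff} applies and yields that $A$ is also an intuitionistic fuzzy right ideal of $S$; being both left and right, $A$ is by definition an intuitionistic fuzzy two-sided ideal of $S$. As $A$ was arbitrary, $S$ is an intuitionistic fuzzy left duo. For the right duo part, run the symmetric argument: if $A=(\mu_A,\gamma_A)$ is any intuitionistic fuzzy right ideal of $S$, then the converse direction of Lemma~\ref{iff} gives that $A$ is an intuitionistic fuzzy left ideal, hence again an intuitionistic fuzzy two-sided ideal. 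Thus $S$ is an intuitionistic fuzzy right duo, and combining the two parts shows that $S$ is an intuitionistic fuzzy duo.

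I do not anticipate any real obstacle: all the substance lies in Lemma~\ref{iff}, whose proof already uses the AG-groupoid identities $(1)$–$(4)$ together with intra-regularity, and the corollary contributes nothing beyond this bookkeeping step. The only point requiring care is to apply Lemma~\ref{iff} in \emph{both} directions so that the left duo and the right duo conditions are each verified; overlooking one of the two implications would leave the statement only half proved.
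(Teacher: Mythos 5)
Your proposal is correct and is exactly the argument the paper intends: the corollary is stated as an immediate consequence of Lemma~\ref{iff}, with both directions of that lemma giving the left-duo and right-duo conditions respectively. No further comment is needed.
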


Let $S$ be an AG-groupoid and let $\emptyset \neq A\subseteq S$ be an $IFS$
of $S,$ then the intuitionistic characteristic function $\chi _{A}=(\mu
_{\chi _{A}},\gamma _{\chi _{A}})$ of $A$ is defined as

\begin{center}
$\mu _{\chi _{A}}(x)=\left\{ 
\begin{array}{c}
1\text{, if }x\in A \\ 
0\text{, if }x\notin A%
\end{array}%
\right. $ and $\gamma _{\chi _{A}}(x)=\left\{ 
\begin{array}{c}
0\text{, if }x\in A \\ 
1\text{, if }x\notin A%
\end{array}%
\right. $
\end{center}

\QTP{Body Math}
It is clear that $\gamma _{\chi _{A}}$ acts as a complement of $\mu _{\chi
_{A}},$ that is, $\gamma _{\chi _{A}}=\mu _{\chi _{A^{C}}}.$

\begin{lemma}
$($\cite{Mordeson},\cite{mad}$)$ \label{00} For any subset $A$ of an
AG-groupoid $S,$ the following properties holds.
\end{lemma}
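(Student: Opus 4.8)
The plan is to establish each of the listed equivalences by the familiar principle that the intuitionistic characteristic function $\chi_A=(\mu_{\chi_A},\gamma_{\chi_A})$ records membership in $A$ through the two extreme value pairs $(1,0)$ and $(0,1)$, so that an inclusion of set-products of the shape ``(some product built from $A$ and $S$) $\subseteq A$'' translates term by term into the defining pointwise inequalities of the corresponding intuitionistic fuzzy notion, and conversely. The technical core I would prove first is the pair of auxiliary identities $\chi_A\circ\chi_B=\chi_{AB}$ and $\chi_A\cap\chi_B=\chi_{A\cap B}$ for nonempty subsets $A,B$ of $S$ (so in particular $\chi_S=\delta$). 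For the product identity, fix $x\in S$ and split on whether $x\in AB$: if $x=ab$ with $a\in A$ and $b\in B$, then the join $\bigvee_{x=pq}\{\mu_{\chi_A}(p)\wedge\mu_{\chi_B}(q)\}$ already contains the term $1\wedge 1$, giving value $1=\mu_{\chi_{AB}}(x)$; if $x\notin AB$, then either $x$ has no factorization at all (and the convention assigns value $0$) or every factorization $x=pq$ has $p\notin A$ or $q\notin B$, so again the join is $0$. The $\gamma$-component is the exact order dual, using $\bigwedge$, $\vee$, and the convention value $1$. The intersection identity is an immediate two-valued pointwise check.

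With these in hand the ideal-type equivalences all run identically, so I would display one representative and indicate that the rest are the same. For the left-ideal case: if $A$ is a left ideal, i.e. $SA\subseteq A$, and $x,y\in S$, then either $y\notin A$, so that $\mu_{\chi_A}(y)=0\le\mu_{\chi_A}(xy)$ and $\gamma_{\chi_A}(y)=1\ge\gamma_{\chi_A}(xy)$ hold for free, or $y\in A$, whence $xy\in SA\subseteq A$ and $\mu_{\chi_A}(xy)=1=\mu_{\chi_A}(y)$, $\gamma_{\chi_A}(xy)=0=\gamma_{\chi_A}(y)$; thus $\chi_A$ is an intuitionistic fuzzy left ideal. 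Conversely, if $\chi_A$ is an intuitionistic fuzzy left ideal and $s\in S$, $a\in A$, then $\mu_{\chi_A}(sa)\ge\mu_{\chi_A}(a)=1$ forces $sa\in A$, so $SA\subseteq A$. The AG-subgroupoid condition $A^2\subseteq A$, the right-ideal and two-sided-ideal conditions, the generalized bi-ideal condition $(AS)A\subseteq A$, the bi-ideal condition, and the interior-ideal condition $(SA)S\subseteq A$ are handled in exactly the same way: read the defining inequalities for $\mu_{\chi_A}$ and $\gamma_{\chi_A}$ directly off the inclusion, and recover the inclusion by specializing all free variables to elements of $A$ (and to arbitrary elements of $S$). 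The quasi-ideal case then falls out of the auxiliary identities: $\chi_A$ is an intuitionistic fuzzy quasi ideal if and only if $(\chi_A\circ\delta)\cap(\delta\circ\chi_A)\subseteq\chi_A$, if and only if $\chi_{AS}\cap\chi_{SA}\subseteq\chi_A$, if and only if $\chi_{AS\cap SA}\subseteq\chi_A$, if and only if $AS\cap SA\subseteq A$, i.e. $A$ is a quasi ideal of $S$.

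The only point that requires care is the bookkeeping in the product identity $\chi_A\circ\chi_B=\chi_{AB}$: one must treat separately the case that $x$ has no factorization $x=pq$ whatsoever --- where the definition of $\circ$ assigns $\mu$-value $0$ and $\gamma$-value $1$, which is exactly right since then $x\notin AB$ --- and, when factorizations do exist, note the asymmetry that on the $\mu$-side a single ``good'' factorization suffices to pull the join up to $1$, whereas on the $\gamma$-side one must check that every factorization is ``bad'' to pull the meet up to $1$. Beyond that the argument is routine finite case analysis; in particular the left invertive law and its consequences $(1)$--$(4)$ are not needed for this lemma, these being generic groupoid facts about characteristic functions.
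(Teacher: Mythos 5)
The paper offers no proof of this lemma: it is imported from \cite{Mordeson} and \cite{mad} and used as a known fact, so there is no internal argument to compare yours against. Your proof is correct and is the standard one: the two-valued case analysis on membership translates the crisp inclusions $A^{2}\subseteq A$, $SA\subseteq A$, $AS\subseteq A$ into the defining pointwise inequalities for $\mu _{\chi _{A}}$ and $\gamma _{\chi _{A}}$ and back again, and the one place where care is genuinely needed --- the convention cases in $\chi _{A}\circ \chi _{B}=\chi _{AB}$ (no factorization at all versus all factorizations bad, and the asymmetry between the join on the $\mu $-side and the meet on the $\gamma $-side) --- is handled correctly. Two small remarks. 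First, the lemma as stated only asserts the equivalences for AG-subgroupoids and left (right, two-sided) ideals, so the product and intersection identities and the bi-, interior- and quasi-ideal cases you add are not required for the statement; they are nevertheless true, proved exactly as you indicate, and your reduction of the quasi-ideal case through $\chi _{S}=\delta $ is consistent with the paper's definition of an intuitionistic fuzzy quasi ideal, so the extra generality does no harm (indeed the paper later uses such facts implicitly). Second, your closing observation that the identities $(1)$--$(4)$ play no role is accurate: the lemma is a generic groupoid fact about characteristic functions, independent of the left invertive law, which is presumably why the authors cite it rather than prove it.
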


$(i)$ $A$ is an AG-subgroupoid of $S$ if and only if $\chi _{A}$ is an
intuitionistic fuzzy AG-subgroupoid of $S$.

$(ii)$ $A$ is a fuzzy left (right, two-sided) ideal of $S$ if and only if $%
\chi _{A}$ is an intuitionistic fuzzy left (right, two-sided) ideal of $S$.

\begin{theorem}
An intra-regular AG-groupoid $S$ with left identity is a left (right) duo if
and only if it is an intuitionistic fuzzy left (right) duo.
\end{theorem}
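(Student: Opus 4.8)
The plan is to prove both implications by transferring between ordinary ideals and their intuitionistic characteristic functions, using Lemma \ref{00} as the bridge. For the forward direction, assume $S$ is a left duo intra-regular AG-groupoid with left identity and let $A=(\mu_A,\gamma_A)$ be an intuitionistic fuzzy left ideal of $S$. By Lemma \ref{iff}, $A$ is automatically an intuitionistic fuzzy two-sided ideal of $S$, so in fact \emph{every} intuitionistic fuzzy left ideal is already two-sided in an intra-regular AG-groupoid with left identity, which means $S$ is trivially an intuitionistic fuzzy left duo regardless of being an (ordinary) left duo. So the forward implication is essentially free from Lemma \ref{iff}.

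For the converse, suppose $S$ is an intuitionistic fuzzy left duo; I want to show $S$ is a left duo, i.e.\ every left ideal $L$ of $S$ is two-sided. Given a left ideal $L$, form its intuitionistic characteristic function $\chi_L=(\mu_{\chi_L},\gamma_{\chi_L})$. By Lemma \ref{00}$(ii)$, $\chi_L$ is an intuitionistic fuzzy left ideal of $S$. By the intuitionistic fuzzy left duo hypothesis, $\chi_L$ is an intuitionistic fuzzy two-sided ideal, hence in particular an intuitionistic fuzzy right ideal of $S$. Applying Lemma \ref{00}$(ii)$ in the other direction, $L$ is a right ideal of $S$, and since it was a left ideal to begin with, it is two-sided. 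Therefore $S$ is a left duo. The right-duo case is handled by the same argument with the roles of left and right interchanged (again using Lemma \ref{iff} for the forward direction and Lemma \ref{00}$(ii)$ for the converse).

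The only genuinely delicate point — and really the main obstacle — is making sure Lemma \ref{00}$(ii)$ is being used legitimately: it asserts the equivalence ``$A$ is a left (right, two-sided) ideal $\iff$ $\chi_A$ is an intuitionistic fuzzy left (right, two-sided) ideal'' for an \emph{arbitrary} subset $A$, with no intra-regularity assumption, so both directions of the characteristic-function correspondence are available to me. Combined with Lemma \ref{iff}, which supplies the coincidence of left and right on the intuitionistic fuzzy side precisely under the standing hypothesis (intra-regular, left identity), the two lemmas interlock cleanly. I would present the proof as two short paragraphs, one per implication, explicitly invoking Lemma \ref{iff} for $\Longrightarrow$ and Lemma \ref{00}$(ii)$ for $\Longleftarrow$, and remark that the bracketed ``right'' version is symmetric.

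\begin{proof}
Let $S$ be an intra-regular AG-groupoid with left identity.

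Suppose $S$ is a left (right) duo. Let $A=(\mu_A,\gamma_A)$ be an intuitionistic fuzzy left (right) ideal of $S$. By Lemma \ref{iff}, $A=(\mu_A,\gamma_A)$ is simultaneously an intuitionistic fuzzy left and an intuitionistic fuzzy right ideal of $S$, that is, an intuitionistic fuzzy two-sided ideal of $S$. Hence every intuitionistic fuzzy left (right) ideal of $S$ is an intuitionistic fuzzy two-sided ideal of $S$, so $S$ is an intuitionistic fuzzy left (right) duo.

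Conversely, suppose $S$ is an intuitionistic fuzzy left (right) duo. Let $L$ be a left (right) ideal of $S$ and consider its intuitionistic characteristic function $\chi_L=(\mu_{\chi_L},\gamma_{\chi_L})$. By Lemma \ref{00}$(ii)$, $\chi_L$ is an intuitionistic fuzzy left (right) ideal of $S$, and therefore, by hypothesis, an intuitionistic fuzzy two-sided ideal of $S$; in particular $\chi_L$ is an intuitionistic fuzzy right (left) ideal of $S$. Applying Lemma \ref{00}$(ii)$ once more, $L$ is a right (left) ideal of $S$. Since $L$ was already a left (right) ideal of $S$, it is a two-sided ideal of $S$, and thus $S$ is a left (right) duo.
\end{proof}
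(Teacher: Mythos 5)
Your proof is correct, but the forward direction takes a genuinely different route from the paper's. For ``left duo $\Longrightarrow$ intuitionistic fuzzy left duo'' the paper argues directly at the level of sets and elements: it uses intra-regularity and the duo hypothesis to show $ab\in Sa$ (since $Sa$ is a left ideal, hence two-sided by assumption), writes $ab=ca$, and then reads off $\mu_A(ab)\geq\mu_A(a)$, $\gamma_A(ab)\leq\gamma_A(a)$; the right-duo half is handled analogously via the two-sided ideal $b^2S$. You instead observe that Lemma \ref{iff} already forces every intuitionistic fuzzy left (right) ideal of an intra-regular AG-groupoid with left identity to be two-sided, so the intuitionistic fuzzy duo property holds unconditionally and the forward implication is immediate --- this is precisely the content of the Corollary the paper states right after Lemma \ref{iff}, so your shortcut is legitimate within the paper's own framework, and it has the merit of exposing that the ``only if'' hypothesis is never actually used (both sides of the equivalence in fact hold automatically once Lemma \ref{iff} and Lemma \ref{00} are available). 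What the paper's longer computation buys is independence from Lemma \ref{iff} in this theorem and an explicit structural fact ($ab\in Sa$, $ab\in b^2S$) that is of some interest on its own. Your converse direction --- characteristic function $\chi_L$, Lemma \ref{00}$(ii)$ in both directions, sandwiched around the intuitionistic fuzzy duo hypothesis --- is exactly the paper's argument, and you also spell out the right-duo converse which the paper dismisses as ``simple.''
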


\begin{proof}
Let an intra-regular AG-groupoid $S$ be a left duo and let $A=(\mu
_{A},\gamma _{A})$ be any intuitionistic fuzzy left ideal of $S$. Let $%
a,b\in S,$ then $a\in (Sa^{2})S.$ Now as $Sa$ is a left ideal of $S,$
therefore by hypothesis, $Sa$ is a two-sided ideal of $S.$ Now by u$\sin $g $%
(4)$ and $(1),$ we have 
\begin{equation*}
ab\in ((Sa^{2})S)b=((S(aa))S)b=((a(Sa))S)b=((S(Sa))a)b\subseteq
((S(Sa))S)S\subseteq Sa.
\end{equation*}

Thus $ab=ca$ for some $c\in S.$ Now $\mu _{A}(ab)=\mu _{A}(ca)\geq \mu
_{A}(a)$ and similarly $\gamma _{A}(ab)=\gamma _{A}(ca)\leq \gamma _{A}(a)$
implies that $A$ is an intuitionistic fuzzy right ideal of $S$ and therefore 
$S$ is an intuitionistic fuzzy left duo.

Conversely, assume that $S$ is a fuzzy left duo and $L$ is any left ideal of 
$S.$ Now by Lemma \ref{00}, the intuitionistic characteristic function$\
\chi _{L}=(\mu _{\chi _{L}},\gamma _{\chi _{L}})$ of$\,$\ $L$ is an
intuitionistic fuzzy left ideal of $S$. Thus by hypothesis $\chi _{L}$ is an
intuitionistic fuzzy two-sided ideal of $S$ and by using Lemma \ref{00}, $L$
is a two-sided ideal of $S$. Thus $S$ is a left duo.

Now again let $S$ be an intra-regular AG-groupoid such that $S$ is a right
duo and assume that $A=(\mu _{A},\gamma _{A})$ is any fuzzy right ideal of $%
S $. Clearly $b^{2}S$ is a right ideal and so is a two-sided ideal of $S.$
Let $a,b\in S,$ then there exist $x,y\in S$ such that $b=(xb^{2})y$. Now by
using $(3)$, we have 
\begin{equation*}
ab=a((xb^{2})y)=a(((ex)(eb^{2}))y)=a(((b^{2}e)(xe))y)\subseteq
S(((b^{2}S)S)S)\subseteq b^{2}S.
\end{equation*}

Thus $ab=(bb)c$ for some $c\in S.$ Now $\mu _{A}(ab)=\mu _{A}((bb)c)\geq \mu
_{A}(b)$ and $\gamma _{A}(ab)=\gamma _{A}((bb)c)\leq \gamma _{A}(b)$ implies
that $A$ is an intuitionistic fuzzy left ideal of $S$ and therefore $S$ is
an intuitionistic fuzzy right duo. The Converse is simple.
\end{proof}

\begin{lemma}
\label{aqw}In an intra-regular AG-groupoid $S,$ $\delta \circ A=A$ and $%
A\circ \delta =A$ holds for an $IFS$ $A=(\mu _{A},\gamma _{A})$ of $S$ where 
$\delta =(S_{\delta },\Theta _{\delta }).$
\end{lemma}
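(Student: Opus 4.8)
The plan is to split each of $\delta\circ A=A$ and $A\circ\delta=A$ into its two inclusions and check them one membership/non-membership function at a time, in the computational spirit of the preceding theorems; throughout I would fix a left identity $e$ of $S$, as in those results, since the element rewritings below use $(4)$.

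I would first dispose of $A\subseteq\delta\circ A$ and $A\subseteq A\circ\delta$, which only require one good factorization of each $a$. From the intra-regularity relation $a=(xa^{2})y$, the chain already run in the proof of the bi-ideal theorem gives, via $(4)$ and $(1)$, the factorization $a=(y(xa))\,a$, in which $a$ itself is the right factor; pushing that chain to the stage $a=(pa)a$ reached there (with $p$ of the shape $a\,q$), a further application of $(1)$ and $(4)$ converts it to $a=a\,d$ for a suitable $d\in S$, with $a$ as the left factor. Feeding $a=(y(xa))a$ into the definition of the product yields $(S_{\delta}\circ\mu_{A})(a)\ge S_{\delta}(y(xa))\wedge\mu_{A}(a)=\mu_{A}(a)$ and $(\Theta_{\delta}\circ\gamma_{A})(a)\le\Theta_{\delta}(y(xa))\vee\gamma_{A}(a)=\gamma_{A}(a)$, i.e.\ $A\subseteq\delta\circ A$; feeding in $a=a\,d$ gives $A\subseteq A\circ\delta$ by the same computation.

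The main obstacle is the reverse inclusions $\delta\circ A\subseteq A$ and $A\circ\delta\subseteq A$, since these ask one to bound $\bigvee_{a=bc}\mu_{A}(c)$ and $\bigvee_{a=bc}\mu_{A}(b)$ over \emph{every} factorization of $a$, not a chosen one. Here I would not compute directly but invoke Lemma \ref{as}$(ii)$: it identifies $S_{\delta}\circ\mu_{A}\subseteq\mu_{A}$ together with $\Theta_{\delta}\circ\gamma_{A}\supseteq\gamma_{A}$ with $A$ being an intuitionistic fuzzy left ideal, and dually $\mu_{A}\circ S_{\delta}\subseteq\mu_{A}$ together with $\gamma_{A}\circ\Theta_{\delta}\supseteq\gamma_{A}$ with $A$ being an intuitionistic fuzzy right ideal; and by Lemma \ref{iff} these two properties coincide in an intra-regular AG-groupoid with left identity, so both reverse inclusions hold at once. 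Combined with the first step this gives $\delta\circ A=A$ and $A\circ\delta=A$. I expect this to be the genuinely delicate point, and it pins down a hypothesis the statement needs: the reverse inclusion forces $A$ to be at least an intuitionistic fuzzy one-sided ideal, since for a completely arbitrary $IFS$ one has $\bigvee_{a=bc}\mu_{A}(b)\nleq\mu_{A}(a)$ in general (e.g.\ take $\mu_{A}$ a singleton characteristic function in Example \ref{ex}).
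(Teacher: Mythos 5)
Your proof of the two forward inclusions $A\subseteq\delta\circ A$ and $A\subseteq A\circ\delta$ is exactly what the paper itself computes: it uses the factorization $a=(y(xa))a$ for the first and $a=(xa^{2})(ey)=\cdots=a((x(ye))a)$ for the second, so that part of your argument coincides with the paper's (your detour through the stage $a=(pa)a$ is a harmless variant). Where you genuinely diverge is on the reverse inclusions, and there you are more careful than the paper: the paper writes $(S_{\delta}\circ\mu_{A})(a)=\bigvee_{a=(y(xa))a}\{S_{\delta}(y(xa))\wedge\mu_{A}(a)\}=\mu_{A}(a)$, silently replacing the supremum over \emph{all} factorizations of $a$ by the one chosen factorization; that chain only yields $\geq$, so the paper in fact proves only $A\subseteq\delta\circ A$ and $A\subseteq A\circ\delta$ and never addresses $\delta\circ A\subseteq A$ or $A\circ\delta\subseteq A$. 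You are right that, by Lemma \ref{as}$(ii)$, these reverse inclusions are literally equivalent to $A$ being an intuitionistic fuzzy left, respectively right, ideal, so they cannot hold for an arbitrary $IFS$; your counterexample sketch is sound (in Example \ref{ex} take $\mu_{A}$ the characteristic function of $\{3\}$: since $3\cdot 3=4$ one gets $(S_{\delta}\circ\mu_{A})(4)\geq 1>0=\mu_{A}(4)$, and likewise for $\mu_{A}\circ S_{\delta}$). Hence the lemma as printed is false for a general $IFS$; the honest statement is the one recorded as Corollary \ref{cor}, and your route --- forward inclusions by explicit factorization, reverse inclusions from Lemma \ref{as}$(ii)$ combined with Lemma \ref{iff} --- is a correct proof of that corollary, at the necessary price of adding the hypothesis that $A$ is a one-sided (hence, by Lemma \ref{iff}, two-sided) intuitionistic fuzzy ideal. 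One further point you handle correctly that the statement obscures: both the paper's computation and yours use the left identity $e$ and the laws $(3)$--$(4)$, so the left-identity assumption you fix at the outset is genuinely needed even though it is omitted from the lemma's wording.
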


\begin{proof}
Let $A=(\mu _{A},\gamma _{A})$ be an $IFS$ of an intra-regular AG-groupoid $%
S $ and let $a\in S,$ then there exist $x\in S$ such that $a=(xa^{2})y.$ Now
by using $(4)$ and $(1),$ we have%
\begin{equation*}
a=(x(aa))y=(a(xa))y=(y(xa))a.
\end{equation*}

Therefore%
\begin{eqnarray*}
(S_{\delta }\circ \mu _{A})(a) &=&\dbigvee\limits_{a=(y(xa))a}\{S_{\delta
}(y(xa))\wedge \mu _{A}(a)\}=\dbigvee\limits_{a=(y(xa))a}\{1\wedge \mu
_{A}(a)\} \\
&=&\dbigvee\limits_{a=(y(xa))a}\mu _{A}(a)=\mu _{A}(a).
\end{eqnarray*}

Similarly we can show that $\Theta _{\delta }\circ \gamma _{A}=\gamma _{A},$
which shows that $\delta \circ A=A.$

Now by using $(3)$ and $(4),$ we have%
\begin{equation*}
a=(xa^{2})(ey)=(ye)(a^{2}x)=(aa)((ye)x)=(x(ye))(aa)=a((x(ye))a).
\end{equation*}

Therefore%
\begin{eqnarray*}
(\mu _{A}\circ S_{\delta })(a) &=&\dbigvee\limits_{a=a((x(ye))a)}\{\mu
_{A}(a)\wedge S_{\delta }((x(ye))a)\}=\dbigvee\limits_{a=a((x(ye))a)}\{\mu
_{A}(a)\wedge 1\} \\
&=&\dbigvee\limits_{a=a((x(ye))a)}\mu _{A}(a)=\mu _{A}(a).
\end{eqnarray*}

Similarly we can show that $\gamma _{A}\circ \Theta _{\delta }=\gamma _{A}$
which shows that $A\circ \delta =A.$
\end{proof}

\begin{corollary}
\label{cor}In an intra-regular AG-groupoid $S,$ $\delta \circ A=A$ and $%
A\circ \delta =A$ holds for every intuitionistic fuzzy left $($right$,$
two-sided$)$ $A=(\mu _{A},\gamma _{A})$ of $S,$ where $\delta =(S_{\delta
},\Theta _{\delta }).$
\end{corollary}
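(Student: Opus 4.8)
The plan is to derive Corollary \ref{cor} immediately from Lemma \ref{aqw} together with the preliminary observations made earlier in the paper. Lemma \ref{aqw} already asserts that $\delta \circ A = A$ and $A \circ \delta = A$ for an \emph{arbitrary} $IFS$ $A = (\mu_A, \gamma_A)$ of an intra-regular AG-groupoid $S$; in particular this holds when $A$ happens to be an intuitionistic fuzzy left, right, or two-sided ideal. So the proof is essentially a one-line specialization: since every intuitionistic fuzzy left (right, two-sided) ideal is in particular an $IFS$, Lemma \ref{aqw} applies verbatim and gives the two equalities.

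First I would state that $S$ is an intra-regular AG-groupoid (note that the hypothesis ``with left identity'' appearing in several earlier results is not needed here, since Lemma \ref{aqw} is stated without it). Then I would take $A = (\mu_A, \gamma_A)$ to be an intuitionistic fuzzy left ideal of $S$. The key — and only — step is to invoke Lemma \ref{aqw} with this particular $A$: because $A$ is an $IFS$ of $S$, the lemma yields $\delta \circ A = A$ and $A \circ \delta = A$. The same argument applies word for word when $A$ is an intuitionistic fuzzy right ideal or an intuitionistic fuzzy two-sided ideal, since in each case $A$ is still just an $IFS$. Hence the corollary holds.

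There is really no obstacle here: the corollary is strictly weaker than Lemma \ref{aqw}, which has already been proved by an explicit computation using the intra-regularity identity $a = (xa^2)y$ together with laws $(1)$, $(3)$ and $(4)$ to rewrite $a$ in the forms $(y(xa))a$ and $a((x(ye))a)$, after which the definition of the product $\circ$ collapses the relevant suprema/infima to $\mu_A(a)$ and $\gamma_A(a)$. The role of this corollary in the paper is purely organizational — it records the ideal-theoretic consequence of the general lemma so that later results characterizing intra-regular AG-groupoids can cite it directly — so the ``proof'' amounts to: \emph{This follows at once from Lemma \ref{aqw}, since every intuitionistic fuzzy left (right, two-sided) ideal of $S$ is an $IFS$ of $S$.}
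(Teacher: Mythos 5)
Your proof is correct and is exactly what the paper intends: the corollary is a direct specialization of Lemma \ref{aqw}, which already holds for an arbitrary $IFS$ of an intra-regular AG-groupoid, so restricting to intuitionistic fuzzy left (right, two-sided) ideals gives the statement immediately. No further comment is needed.
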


\begin{lemma}
\label{ss}In an intra-regular AG-groupoid $S,$ $\delta \circ \delta =\delta
, $ where $\delta =(S_{\delta },\Theta _{\delta }).$
\end{lemma}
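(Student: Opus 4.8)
The quickest route is to observe that the claim is literally the special case $A=\delta$ of Lemma~\ref{aqw}: the IFS $\delta=(S_{\delta},\Theta_{\delta})$ is an IFS of $S$, so Lemma~\ref{aqw} gives $\delta\circ\delta=\delta$ at once. For completeness I would also record the one-line direct argument, since it isolates the only fact that is really used, namely that in an intra-regular AG-groupoid every element is a product of two elements.

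The direct approach is to unwind the definition of the convolution $\circ$. For $a\in S$ we have $(S_{\delta}\circ S_{\delta})(a)=\bigvee_{a=bc}\{S_{\delta}(b)\wedge S_{\delta}(c)\}$ when $a$ can be written as $bc$ for some $b,c\in S$, and $(S_{\delta}\circ S_{\delta})(a)=0$ otherwise. Since $S_{\delta}$ is identically $1$, every summand $S_{\delta}(b)\wedge S_{\delta}(c)$ equals $1$, so $(S_{\delta}\circ S_{\delta})(a)=1$ precisely when $a$ admits at least one factorization. Now intra-regularity supplies such a factorization: there exist $x,y\in S$ with $a=(xa^{2})y=(xa^{2})y$, i.e. $a=bc$ with $b=xa^{2}$ and $c=y$. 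Hence $(S_{\delta}\circ S_{\delta})(a)=1=S_{\delta}(a)$ for every $a\in S$, that is $S_{\delta}\circ S_{\delta}=S_{\delta}$.

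For the non-membership part the computation is dual: $(\Theta_{\delta}\circ\Theta_{\delta})(a)=\bigwedge_{a=bc}\{\Theta_{\delta}(b)\vee\Theta_{\delta}(c)\}$ when $a$ factors, and $1$ otherwise. Again using $a=(xa^{2})y$ we see $a$ always factors, and each meetand $\Theta_{\delta}(b)\vee\Theta_{\delta}(c)=0\vee 0=0$, so $(\Theta_{\delta}\circ\Theta_{\delta})(a)=0=\Theta_{\delta}(a)$ for all $a$, i.e. $\Theta_{\delta}\circ\Theta_{\delta}=\Theta_{\delta}$. Combining the two equalities yields $\delta\circ\delta=\delta$.

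There is essentially no obstacle here: the only substantive input is that intra-regularity forces every $a\in S$ to be decomposable as a product (so the relevant sup/inf is taken over a non-empty index set), after which the identically-$1$ / identically-$0$ nature of $S_{\delta}$ and $\Theta_{\delta}$ makes both sides collapse to the constant values. The mild care needed is only notational—tracking when the ``otherwise'' branch in the definition of $\circ$ could be triggered and checking it never is.
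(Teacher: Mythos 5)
Your direct computation is exactly the paper's proof: intra-regularity guarantees that every $a\in S$ factors as $a=(xa^{2})y$, so the sup/inf in the definition of $\circ$ runs over a non-empty index set and $(S_{\delta}\circ S_{\delta})(a)=1=S_{\delta}(a)$, $(\Theta_{\delta}\circ\Theta_{\delta})(a)=0=\Theta_{\delta}(a)$ follow at once; the proposal is correct and takes essentially the same approach. Your shortcut via Lemma~\ref{aqw} with $A=\delta$ is also valid as that lemma is stated, but note that the paper's proof of Lemma~\ref{aqw} invokes the left identity (laws $(3)$ and $(4)$), whereas the direct argument — like the paper's own proof here — needs no left identity, so the direct route is the more general of the two.
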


\begin{proof}
Let $S$ be an intra-regular AG-groupoid, then%
\begin{equation*}
(S_{\delta }\circ S_{\delta })(a)=\dbigvee\limits_{a=(xa^{2})y}\{S_{\delta
}(xa^{2})\wedge S_{\delta }(y)\}=1=S_{\delta }(a)
\end{equation*}

and%
\begin{equation*}
(\Theta _{\delta }\circ \Theta _{\delta
})(a)=\dbigwedge\limits_{a=(xa^{2})y}\{\Theta _{\delta }(xa^{2})\vee \Theta
_{\delta }(y)\}=0=\Theta _{\delta }(a).
\end{equation*}
\end{proof}

\begin{theorem}
\label{ae}Let $A=(\mu _{A},\gamma _{A})$ be an $IFS$ of an intra-regular
AG-groupoid $S$ with left identity$,$ then the following conditions are
equivalent.
\end{theorem}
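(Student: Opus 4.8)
The plan is to close a cycle of implications among the listed conditions, working with the product identities already in hand rather than arguing pointwise wherever possible. The equivalence of the two one-sided conditions --- that $A$ is an intuitionistic fuzzy left ideal exactly when it is an intuitionistic fuzzy right ideal --- is precisely Lemma \ref{iff}; hence an intuitionistic fuzzy left (equivalently right) ideal is automatically two-sided, and a two-sided ideal is trivially both, so the left, right and two-sided clauses fuse into one at no cost.

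From a two-sided ideal the ``generalized'' clauses follow by short containments. For the interior clause, $\mu _{A}((xa)y)\geq \mu _{A}(xa)\geq \mu _{A}(a)$ by the right and then the left property, and dually $\gamma _{A}((xa)y)\leq \gamma _{A}(a)$. For the quasi clause, Lemma \ref{as} gives $\delta \circ \mu _{A}\subseteq \mu _{A}$ (and the dual $\Theta \circ \gamma _{A}\supseteq \gamma _{A}$), so $(A\circ \delta )\cap (\delta \circ A)\subseteq \delta \circ A\subseteq A$.

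The substance is in the converses, where intra-regularity genuinely enters. For the interior-to-left step I would avoid a pointwise argument altogether and invoke the earlier characterization of intuitionistic fuzzy interior ideals, $(\delta \circ A)\circ \delta =A$; then $\delta \circ A=\delta \circ ((\delta \circ A)\circ \delta )=(\delta \circ A)\circ (\delta \circ \delta )=(\delta \circ A)\circ \delta =A$, using identity $(4)$ for the groupoid $(A_{I},\circ )$ and Lemma \ref{ss} ($\delta \circ \delta =\delta $). From $\delta \circ A=A$ and Lemma \ref{as}(ii), $A$ is an intuitionistic fuzzy left ideal. For the quasi-to-left step the task is to manufacture, for an arbitrary product $sa$ and for each of the two factors appearing in $(A\circ \delta )\cap (\delta \circ A)$, a factorization of $sa$ --- obtained from the intra-regularity relation $a=(xa^{2})y$ by repeated use of $(1)$--$(4)$ exactly as in the displays above --- that exhibits $\mu _{A}(a)$ (resp. $\gamma _{A}(a)$) as a bound; feeding these into the defining containment gives $\mu _{A}(sa)\geq \mu _{A}(a)$ and $\gamma _{A}(sa)\leq \gamma _{A}(a)$, so $A$ is a left ideal.

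The step I expect to be the main obstacle is precisely this last converse (and any bi- or quasi- to one-sided passage the statement may include): these are the only points where the identity $a=(xa^{2})y$ and the four laws must be driven through long rewriting chains of the type seen earlier, and an association slip there is the easiest error to commit. Everything outside those rewrites reduces to Lemma \ref{iff}, Lemma \ref{as}, Lemma \ref{ss}, Lemma \ref{aqw}, Corollary \ref{cor}, or the product characterizations already proved.
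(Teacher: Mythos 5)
Your proposal targets the wrong list of conditions. Theorem \ref{ae} has exactly two clauses: $(i)$ $A$ is an intuitionistic fuzzy quasi ideal of $S$, which by the paper's definition means $(A\circ \delta )\cap (\delta \circ A)\subseteq A$, and $(ii)$ $(A\circ \delta )\cap (\delta \circ A)=A$; moreover it is asserted for an \emph{arbitrary} $IFS$ $A$ of the intra-regular AG-groupoid, not only for ideals. The chain you build -- left $\Leftrightarrow$ right $\Leftrightarrow$ two-sided via Lemma \ref{iff}, two-sided $\Rightarrow$ interior and quasi via Lemma \ref{as}, and the converses via $(\delta \circ A)\circ \delta =A$ and long intra-regularity rewrites -- is the content of the later results (Theorem \ref{qu}, Theorem \ref{intr} and the final eight-condition characterization), and none of it establishes the equality in $(ii)$ from hypothesis $(i)$, nor does it address the direction $(ii)\Rightarrow (i)$ of this particular statement. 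In particular, your ``quasi-to-left'' rewriting ends at the conclusion that $A$ is a one-sided ideal, which is neither clause of Theorem \ref{ae}.

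What the actual statement requires is essentially only Lemma \ref{aqw}: in an intra-regular AG-groupoid one has $A\circ \delta =A$ and $\delta \circ A=A$ for \emph{every} $IFS$ $A$ (no ideal hypothesis), hence $(A\circ \delta )\cap (\delta \circ A)=A\cap A=A$, which gives $(i)\Rightarrow (ii)$ at once (indeed $(ii)$ holds unconditionally here); and $(ii)\Rightarrow (i)$ is immediate because the equality in $(ii)$ is stronger than the containment $(A\circ \delta )\cap (\delta \circ A)\subseteq A$ defining a quasi ideal. You cite Lemma \ref{aqw} only in passing as a background tool, so the one ingredient the paper's proof actually rests on never does any work in your argument; as written, the proposal proves (a sketch of) a different theorem and leaves Theorem \ref{ae} unproved.
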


$(i)$ $A=(\mu _{A},\gamma _{A})$ is an intuitionistic fuzzy quasi ideal of $%
S $.

$(ii)$ $(A\circ \delta )\cap (\delta \circ A)=A,$ where $\delta =(S_{\delta
},\Theta _{\delta }).$

\begin{proof}
$(i)\Longrightarrow (ii)$ can be followed from Lemma \ref{aqw} and $%
(ii)\Longrightarrow (i)$ is obvious.
\end{proof}

\begin{theorem}
\label{qu}Let $A=(\mu _{A},\gamma _{A})$ an $IFS$ of an intra-regular
AG-groupoid $S$ with left identity, then the following statements are
equivalent.
\end{theorem}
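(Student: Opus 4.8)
The theorem asserts the equivalence of the various intuitionistic fuzzy ideal notions introduced in the preliminaries: for an $IFS$ $A=(\mu _{A},\gamma _{A})$ of an intra-regular AG-groupoid with left identity, being an intuitionistic fuzzy left ideal, a right ideal, a two-sided ideal, an interior ideal and a quasi ideal (and, in all likelihood, the bi-ideal and generalized bi-ideal conditions) all coincide. The plan is to prove this as a cycle of implications, carrying out the ``forward'' implications by the $\circ $-operator lemmas collected in the preliminaries and reserving a single genuine AG-groupoid computation for the step that closes the cycle.

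For the forward directions: Lemma \ref{iff} already gives that an intuitionistic fuzzy left ideal is an intuitionistic fuzzy right ideal and conversely, so left, right and two-sided agree at once. If $A$ is an intuitionistic fuzzy two-sided ideal then, for $x,a,y\in S$, applying the right-ideal inequality to the outer product and the left-ideal inequality to $xa$ gives $\mu _{A}((xa)y)\geq \mu _{A}(xa)\geq \mu _{A}(a)$ and, dually, $\gamma _{A}((xa)y)\leq \gamma _{A}(xa)\leq \gamma _{A}(a)$, so $A$ is an intuitionistic fuzzy interior ideal; the same two inequalities, together with the fact that a two-sided ideal is automatically an intuitionistic fuzzy AG-subgroupoid, yield the bi- and generalized bi-ideal inequalities as well. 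Finally, an intuitionistic fuzzy right ideal satisfies $A\circ \delta \subseteq A$ by Lemma \ref{as}, hence $(A\circ \delta )\cap (\delta \circ A)\subseteq A\circ \delta \subseteq A$, so $A$ is an intuitionistic fuzzy quasi ideal.

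It then remains to return from the weaker notions to ``left ideal''. In the quasi-ideal case, Theorem \ref{ae} upgrades the containment to $(A\circ \delta )\cap (\delta \circ A)=A$; feeding in Lemma \ref{aqw} to identify $\delta \circ A$ and $A\circ \delta $, and then Lemma \ref{as}, one recovers $S\circ \mu _{A}\subseteq \mu _{A}$ and $\Theta \circ \gamma _{A}\supseteq \gamma _{A}$, i.e. $A$ is an intuitionistic fuzzy left ideal. In the interior-ideal (and bi-/generalized bi-ideal) case I would argue directly: given $a,b\in S$, use intra-regularity to write $b=(xb^{2})y$ and then apply the left invertive law $(1)$, the medial law $(2)$, the paramedial law $(3)$ and the identity $(4)$ in turn to massage $ab$ into a factorization of the form $((ub)v)$ with $b$ sitting in the interior slot (and occurring twice when the bi-ideal inequality is the one available); the defining inequality of the notion in hand then gives $\mu _{A}(ab)\geq \mu _{A}(b)$ and $\gamma _{A}(ab)\leq \gamma _{A}(b)$, so $A$ is an intuitionistic fuzzy left ideal by definition. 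This last rewriting --- pushing the left identity through $ab$ until $b$ lands in exactly the algebraic position the weaker definition can ``see'' --- is the one real obstacle; it is entirely analogous to the chains of identities already used in the proofs of Lemma \ref{iff} and of the interior-ideal theorem, and once it is in place the remaining implications follow by routine $\circ $-operator bookkeeping.
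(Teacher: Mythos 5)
You have misread the scope of Theorem \ref{qu}: it asserts only that (i) ``$A$ is an intuitionistic fuzzy two-sided ideal'' and (ii) ``$A$ is an intuitionistic fuzzy quasi ideal'' are equivalent; the multi-way equivalence with interior, bi- and generalized bi-ideals is the next theorem in the paper, so most of your outline addresses a different statement. For the direction that matters here, two-sided $\Longrightarrow$ quasi, your argument (right ideal gives $A\circ \delta \subseteq A$ by Lemma \ref{as}, hence $(A\circ \delta )\cap (\delta \circ A)\subseteq A$) is fine and in the spirit of the paper, which cites Corollary \ref{cor} and Theorem \ref{ae}. The genuine gap is in the converse. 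You propose: Theorem \ref{ae} gives $(A\circ \delta )\cap (\delta \circ A)=A$, then Lemma \ref{aqw} ``identifies'' $\delta \circ A$ and $A\circ \delta$ with $A$, then Lemma \ref{as} yields the left-ideal property. But Lemma \ref{aqw} is stated for an \emph{arbitrary} $IFS$, so if its equalities could be used at that strength your argument would never need the quasi-ideal hypothesis at all: every $IFS$ of an intra-regular AG-groupoid with left identity would satisfy $\delta \circ A=A$ and hence be an intuitionistic fuzzy left ideal, contradicting the paper's own example (built on the intra-regular AG-groupoid of Example \ref{ex}) of an $IFS$ that is not a left (right) ideal. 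What the computation in Lemma \ref{aqw} actually establishes is only $A\subseteq \delta \circ A$ and $A\subseteq A\circ \delta$: the supremum defining $(S_{\delta }\circ \mu _{A})(a)$ runs over \emph{all} factorizations of $a$, and exhibiting the single factorization $a=(y(xa))a$ bounds it from below by $\mu _{A}(a)$, not from above. An argument that proves too much is the signal that the hypothesis has not been used where it must be; as written, your step ``quasi $\Longrightarrow$ left'' would fail.

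The paper's proof does the work you have skipped. From $a=(xa^{2})y$ it derives, using $(4)$ and $(3)$, the factorization $a=a((ye)((ae)x))$, which gives $(\mu _{A}\circ S_{\delta })(a)\geq \mu _{A}(a)$ and dually, i.e. $A\subseteq A\circ \delta$ for the given $IFS$. Then, using Lemma \ref{ss} ($\delta \circ \delta =\delta$), Lemma \ref{aqw} and the medial law $(2)$ in the operator algebra, it obtains $A\circ \delta =(\delta \circ A)\circ (\delta \circ \delta )=(\delta \circ \delta )\circ (A\circ \delta )=\delta \circ (A\circ \delta )\supseteq \delta \circ A$, so that $\delta \circ A\subseteq (A\circ \delta )\cap (\delta \circ A)$. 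Only at this point is the quasi-ideal hypothesis invoked, giving $\delta \circ A\subseteq A$; Lemma \ref{as} then makes $A$ an intuitionistic fuzzy left ideal and Lemma \ref{iff} upgrades it to a two-sided ideal. The containment $\delta \circ A\subseteq A\circ \delta$ and the explicit intra-regular factorization behind $A\subseteq A\circ \delta$ are exactly the ingredients missing from your outline, and they cannot be recovered from the face-value reading of Lemma \ref{aqw} on which your argument rests.
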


$(i)$ $A$ is an intuitionistic fuzzy two-sided ideal of $S.$

$(ii)$ $A$ is an intuitionistic fuzzy quasi ideal of $S.$

\begin{proof}
$(i)\Longrightarrow (ii)$ is an easy consequence of Corollary \ref{cor} and
Theorem \ref{ae}.

$(ii)\Longrightarrow (i):$ Let $A=(\mu _{A},\gamma _{A})$ be an
intuitionistic fuzzy quasi ideal of an intra-regular AG-groupoid $S$ with
left identity and let $a\in S,$ then there exist $x\in S$ such that $%
a=(xa^{2})y.$ Now by using $(4)$ and $(3),$ we have%
\begin{eqnarray*}
a &=&(x(aa))y=(a(xa))(ey)=(ye)((xa)(ea))=(ye)((ae)(ax)) \\
&=&(ye)(a((ae)x))=a((ye)((ae)x)).
\end{eqnarray*}

Therefore 
\begin{eqnarray*}
(\mu _{A}\circ S_{\delta })(a) &=&\dbigvee\limits_{a=a((ye)((ae)x))}\{\mu
_{A}(a)\wedge S_{\delta }((ye)((ae)x))\} \\
&\geq &\mu _{A}(a)\wedge 1=\mu _{A}(a)\text{.}
\end{eqnarray*}%
Similarly we can show that $\gamma _{A}\circ \Theta _{\delta }\subseteq
\gamma _{A}$ which implies that $A\circ \delta \supseteq A.$ Now by using
Lemmas \ref{aqw}, \ref{ss} and $(2),$ we have%
\begin{equation*}
A\circ \delta =(\delta \circ A)\circ (\delta \circ \delta )=(\delta \circ
\delta )\circ (A\circ \delta )=\delta \circ (A\circ \delta )\supseteq \delta
\circ A\text{.}
\end{equation*}

This shows that $\delta \circ A\subseteq (A\circ \delta )\cap (\delta \circ
A).$ As $A$ is an intuitionistic fuzzy quasi ideal of $S,$ thus we get $%
\delta \circ A\subseteq A$. Now by using Lemma \ref{as}, $A$ is an
intuitionistic fuzzy left ideal of $S$ and by Lemma \ref{iff}, $A$ is an
intuitionistic fuzzy right ideal of $S,$ that is, $A\ $is an intuitionistic
fuzzy two-sided ideal of $S.$
\end{proof}

\begin{theorem}
\label{intr}Let $A=(\mu _{A},\gamma _{A})$ be an $IFS$ of an intra-regular
AG-groupoid $S$ with left identity, then the following statements are
equivalent.
\end{theorem}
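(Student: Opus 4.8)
The plan is to prove all the listed conditions equivalent by driving everything down to a single condition --- ``$A$ is an intuitionistic fuzzy left ideal'' --- and recycling the apparatus already in place. First I would bank the cheap equivalences. By Lemma \ref{iff} the notions of intuitionistic fuzzy left ideal and intuitionistic fuzzy right ideal coincide on an intra-regular AG-groupoid with left identity, and since an intuitionistic fuzzy two-sided ideal is by definition one that is both, these three are automatically equivalent; Theorem \ref{qu} then adjoins ``intuitionistic fuzzy quasi ideal'' to the same class. So the only genuinely new work is to splice in the remaining condition(s): the intuitionistic fuzzy interior ideal, and, if they occur in the statement, the intuitionistic fuzzy bi-ideal and generalized bi-ideal.

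The easy half of the new work is routine and I would do it by hand. If $A=(\mu_A,\gamma_A)$ is an intuitionistic fuzzy two-sided ideal, then for all $x,a,y\in S$ we get $\mu_A((xa)y)\ge\mu_A(xa)\ge\mu_A(a)$ by one use of the right-ideal inequality and one use of the left-ideal inequality, and dually $\gamma_A((xa)y)\le\gamma_A(xa)\le\gamma_A(a)$, so $A$ is an intuitionistic fuzzy interior ideal. The parallel implications ``two-sided $\Rightarrow$ bi-ideal $\Rightarrow$ generalized bi-ideal'', if needed, are equally immediate once one notes that an intuitionistic fuzzy left ideal is automatically an intuitionistic fuzzy AG-subgroupoid, because $\mu_A(xy)\ge\mu_A(y)\ge\mu_A(x)\wedge\mu_A(y)$ and $\gamma_A(xy)\le\gamma_A(y)\le\gamma_A(x)\vee\gamma_A(y)$.

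For the converse half I would avoid rewriting products and instead exploit the operator characterisations already proved together with the fact that $(A_{I},\circ)$ satisfies $(1)$--$(4)$. Suppose $A$ is an intuitionistic fuzzy interior ideal; by the interior-ideal theorem $(\delta\circ A)\circ\delta=A$, hence
\begin{equation*}
\delta\circ A=\delta\circ\bigl((\delta\circ A)\circ\delta\bigr)=(\delta\circ A)\circ(\delta\circ\delta)=(\delta\circ A)\circ\delta=A ,
\end{equation*}
where the middle step is law $(4)$ in $(A_{I},\circ)$ and the third uses $\delta\circ\delta=\delta$ from Lemma \ref{ss}. Thus $S\circ\mu_A\subseteq\mu_A$ and $\Theta\circ\gamma_A\supseteq\gamma_A$, so by Lemma \ref{as} the $IFS$ $A$ is an intuitionistic fuzzy left ideal, and the cycle closes. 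If bi-ideals are among the conditions, the same device applies to the characterisation $A\circ A=A$, $(A\circ\delta)\circ A=A$: one computes
\begin{equation*}
A=(A\circ\delta)\circ A=(A\circ\delta)\circ(A\circ A)=(A\circ A)\circ(\delta\circ A)=A\circ(\delta\circ A)=\delta\circ(A\circ A)=\delta\circ A ,
\end{equation*}
using the medial law $(2)$, law $(4)$ and $A\circ A=A$, and again Lemma \ref{as} gives that $A$ is an intuitionistic fuzzy left ideal.

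The step I expect to be the real obstacle is recovering a left ideal from a generalized bi-ideal --- the weakest of all the hypotheses --- should that implication appear in the statement, since there the AG-subgroupoid property is unavailable. I would handle it in one of two ways: either first prove that an intuitionistic fuzzy generalized bi-ideal of an intra-regular AG-groupoid with left identity is automatically an intuitionistic fuzzy AG-subgroupoid (thereby reducing it to the bi-ideal case treated above), or carry out a direct intra-regular computation, substituting $y=(py^{2})q$ and then pushing $xy=x((py^{2})q)$ through repeated applications of $(1)$--$(4)$ and insertions of the left identity until it appears in the shape $((xc)y)$, at which point the generalized-bi-ideal inequalities yield $\mu_A(xy)\ge\mu_A(y)$ and $\gamma_A(xy)\le\gamma_A(y)$. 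This is the familiar long but mechanical AG-groupoid bookkeeping of the earlier proofs; everything else in the argument is short.
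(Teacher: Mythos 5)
Your argument is correct, but for the nontrivial direction it takes a genuinely different route from the paper. The theorem labelled here is the two-condition equivalence (intuitionistic fuzzy two-sided ideal $\Longleftrightarrow$ intuitionistic fuzzy interior ideal). For (i)$\Rightarrow$(ii) your two-line estimate $\mu _{A}((xa)y)\geq \mu _{A}(xa)\geq \mu _{A}(a)$, $\gamma _{A}((xa)y)\leq \gamma _{A}(xa)\leq \gamma _{A}(a)$ is exactly what the paper dismisses as obvious. For (ii)$\Rightarrow$(i) the paper works at the level of elements: it expands $a=(xa^{2})y$, rewrites $ab$ via $(4)$, $(1)$, $(3)$ into the form $((xa)a)(yb)$, applies the interior-ideal inequality to conclude $A$ is an intuitionistic fuzzy right ideal, and then invokes Lemma \ref{iff}. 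You instead work at the level of the operator calculus: you quote the earlier (unlabelled) theorem that an interior ideal of an intra-regular AG-groupoid with left identity satisfies $(\delta \circ A)\circ \delta =A$, then compute $\delta \circ A=\delta \circ ((\delta \circ A)\circ \delta )=(\delta \circ A)\circ (\delta \circ \delta )=(\delta \circ A)\circ \delta =A$ using law $(4)$ for the product of $IFS$s (which the paper asserts for $(A_{I},\circ )$ and itself exploits in Theorem \ref{qu}) and $\delta \circ \delta =\delta $ from Lemma \ref{ss}; Lemma \ref{as} then gives that $A$ is an intuitionistic fuzzy left ideal and Lemma \ref{iff} upgrades it to two-sided. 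This is valid and has no circularity, since the characterization $(\delta \circ A)\circ \delta =A$ is proved independently earlier; it buys you freedom from the bracket-shuffling computation, at the price of leaning on that earlier theorem (whose proof is precisely such a computation) and on the claim that $\circ$ inherits $(1)$--$(4)$. One remark: since the statement you were shown omitted the list of conditions, much of your proposal (bi-ideals, generalized bi-ideals, quasi ideals, and the worry about recovering a left ideal from a generalized bi-ideal) addresses the later eight-condition theorem rather than this one; those portions are not needed here, though the algebraic identities you sketch for them are themselves sound.
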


$(i)$ $A$ is an intuitionistic fuzzy two-sided ideal of $S.$

$(ii)$ $A$ is an intuitionistic fuzzy interior ideal of $S.$

\begin{proof}
$(i)\Longrightarrow (ii)$ is obvious.

$(ii)\Longrightarrow (i):$ Let $A=(\mu _{A},\gamma _{A})$ be an
intuitionistic fuzzy interior ideal of an intra-regular AG-groupoid $S$ with
left identity and let $a,b\in S,$ then there exist $x\in S$ such that $%
a=(xa^{2})y.$ Now by using $(4)$, $(1)$ and $(3),$ we have%
\begin{eqnarray*}
\mu _{A}(ab) &=&\mu _{A}(((x(aa))y)b)=\mu _{A}(((a(xa))y)b)=\mu
_{A}((by)(a(xa))) \\
&=&\mu _{A}(((xa)a)(yb))\geq \mu _{A}(a).
\end{eqnarray*}

Similarly we can prove that $\gamma _{A}(ab)\leq \gamma _{A}(a).$ Thus $A$
is an intuitionistic fuzzy right ideal of $S$ and by using Lemma \ref{iff}, $%
A$ is an intuitionistic fuzzy two-sided ideal of $S.$
\end{proof}

\begin{theorem}
Let $A=(\mu _{A},\gamma _{A})$ be an $IFS$ of an intra-regular AG-groupoid $%
S $ with left identity, then the following statements are equivalent.
\end{theorem}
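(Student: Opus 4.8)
The plan is to read this theorem as the culmination of the preceding sequence: in an intra-regular AG-groupoid $S$ with left identity, the several fuzzy ideal notions attached to the $IFS$ $A=(\mu_{A},\gamma_{A})$ all coincide, and the proof should be an assembly of equivalences already in hand rather than a fresh computation. I would organize it as a cycle of implications threaded through the ``intuitionistic fuzzy two-sided ideal'' condition, which serves as the hub.

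First I would dispose of the one-sided versions. That an intuitionistic fuzzy two-sided ideal is both an intuitionistic fuzzy left and an intuitionistic fuzzy right ideal is immediate from the definition, so the two-sided condition implies each one-sided one. The converse is exactly Lemma \ref{iff}: in an intra-regular AG-groupoid with left identity an intuitionistic fuzzy left ideal is automatically an intuitionistic fuzzy right ideal, and conversely, so either one-sided condition already forces the two-sided one. This closes the left/right/two-sided part of the equivalence. Next I would fold in the interior and quasi conditions. Theorem \ref{intr} supplies ``intuitionistic fuzzy two-sided ideal $\Longleftrightarrow$ intuitionistic fuzzy interior ideal'', and Theorem \ref{qu} supplies ``intuitionistic fuzzy two-sided ideal $\Longleftrightarrow$ intuitionistic fuzzy quasi ideal''; transitivity through the hub then links interior and quasi to everything else.

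Should the statement also record product identities such as $\delta\circ A=A$, $A\circ\delta=A$, or $(A\circ\delta)\cap(\delta\circ A)=A$, the implication from an ideal condition to these identities follows from Corollary \ref{cor} (and, for the quasi form, Theorem \ref{ae}), while the reverse implication is delicate for a different reason: Lemma \ref{aqw} shows $\delta\circ A=A$ and $A\circ\delta=A$ hold for \emph{every} $IFS$ of $S$, so the product identities carry no information on their own, and one would have to route the return trip through a genuine ideal condition, e.g.\ the quasi one via Theorem \ref{ae} followed by Theorem \ref{qu}.

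I do not expect a real obstacle here: every arrow is an appeal to Lemmas \ref{as}, \ref{iff}, \ref{aqw}, \ref{ss}, Corollary \ref{cor}, or Theorems \ref{ae}, \ref{qu}, \ref{intr}. The one thing to watch is that the implication graph is actually connected --- each listed condition must be tied to the two-sided hub in both directions --- and, if some condition has not yet been handled by an earlier result, to prove that link by the now-standard device: expand $a=(xa^{2})y$, shuffle the spare element using the left invertive law $(1)$, the medial law $(2)$, the paramedial law $(3)$ and identity $(4)$ until $a$ is displayed in the shape demanded by the target definition, and then read off the required inequality for $\mu_{A}$ together with the dual one for $\gamma_{A}$. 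I would present the final proof as a short chain $(i)\Longrightarrow(ii)\Longrightarrow\cdots\Longrightarrow(i)$ with the cited result named over each implication.
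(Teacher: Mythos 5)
Your plan is the right skeleton and, for the conditions you actually name, it is essentially the paper's argument: Lemma \ref{iff} handles left/right/two-sided, Theorem \ref{intr} ties in interior, Theorem \ref{qu} ties in quasi, and the $\delta$-identities come from Corollary \ref{cor} with the return trip routed through the quasi condition via Theorem \ref{ae} (the paper does exactly this: $(i)\Rightarrow(viii)$ by Corollary \ref{cor}, and $(viii)\Rightarrow(vii)$ is immediate). Your side remark that Lemma \ref{aqw}, taken literally for every $IFS$, would make the product identities carry no information is a fair criticism of the paper itself, and your decision to re-enter the cycle only through a genuine ideal condition is the correct repair.

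The gap is that the theorem's list also contains the intuitionistic fuzzy bi-ideal and generalized bi-ideal conditions, and these are precisely where the paper's proof does its real work; they cannot be obtained by assembling the earlier cited results. The paper proves a linear chain $(i)\Rightarrow(viii)\Rightarrow(vii)\Rightarrow(vi)\Rightarrow(v)\Rightarrow(iv)\Rightarrow(iii)\Rightarrow(ii)\Rightarrow(i)$, and the steps $(vii)\Rightarrow(vi)$, $(v)\Rightarrow(iv)$ and $(iv)\Rightarrow(iii)$ are fresh intra-regular computations: for $(v)\Rightarrow(iv)$ one must show a generalized bi-ideal is an AG-subgroupoid by rewriting the \emph{product} $ab$ as $(a((ye)((ae)x)))b$, and for $(iv)\Rightarrow(iii)$ one rewrites $ab$ as $(a^{2}((yb)((xe)((ye)x))))a$ so that the bi-ideal condition applies with both outer entries built from $a$, after which Lemma \ref{iff} upgrades the resulting right-ideal property to two-sided. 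Your one-sentence recipe (``expand $a=(xa^{2})y$ and shuffle until $a$ has the required shape'') points in the right direction but understates what is needed: the element to be reshaped is $ab$, not $a$, the target shape depends on which definition must absorb it, and these manipulations — the longest in the paper — are exactly the content your proposal defers. As written, the proposal proves the equivalence of $(i)$--$(iii)$, $(vi)$--$(viii)$ but leaves the bi-ideal and generalized bi-ideal conditions unestablished.
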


$(i)$ $A$ is an intuitionistic fuzzy left ideal of $S$.

$(ii)$ $A$ is an intuitionistic fuzzy right ideal of $S$.

$(iii)$ $A$ is an intuitionistic fuzzy two-sided ideal of $S$.

$(iv)$ $A$ is an intuitionistic fuzzy bi-ideal of $S$.

$(v)$ $A$ is an intuitionistic fuzzy generalized bi-ideal of $S$.

$(vi)$ $A$ is an intuitionistic fuzzy interior ideal of $S$.

$(vii)$ $A$ is an intuitionistic fuzzy quasi ideal of $S.$

$(viii)$ $A\circ \delta =A$ and $\delta \circ A=A.$

\begin{proof}
$(i)\Longrightarrow (viii)$ can be followed from Corollary \ref{cor} and $%
(ix)\Longrightarrow (viii)$ is obvious.

$(vii)\Longrightarrow (vi):$ Let $A=(\mu _{A},\gamma _{A})$ be an
intuitionistic fuzzy quasi ideal of an intra-regular AG-groupoid $S$ with
left identity. Now for $a\in S$ there exist $x,y\in S$ such that $%
a=(ba^{2})c.$ Now by using $(4),$ $(3)$ and $(1)$, we have%
\begin{eqnarray*}
(xa)y &=&(x((ba^{2})c))y=((ba^{2})(xc))y=((cx)(a^{2}b))y=(a^{2}((cx)b))y \\
&=&(y((cx)b))(aa)=a((y((cx)b))a)
\end{eqnarray*}

and%
\begin{eqnarray*}
(xa)y &=&(x((ba^{2})c))y=((ba^{2})(xc))y=((cx)(a^{2}b))y=(a^{2}((cx)b))y \\
&=&(y((cx)b))(aa)=(aa)(((cx)b)y)=((((cx)b)y)a)a.
\end{eqnarray*}
Now by using Theorem \ref{ae}, we have%
\begin{equation*}
\mu _{A}((xa)y)=((\mu _{A}\circ S_{\delta })\cap (S_{\delta }\circ \mu
_{A}))((xa)y)=(\mu _{A}\circ S_{\delta })((xa)y)\wedge (S_{\delta }\circ \mu
_{A})((xa)y).
\end{equation*}

Now%
\begin{equation*}
(\mu _{A}\circ S_{\delta
})((xa)y)=\dbigvee\limits_{(xa)y=a((y((cx)b))a)}\left\{ \mu _{A}(a)\wedge
S_{\delta }((y((cx)b))a)\right\} \geq \mu _{A}(a)
\end{equation*}

and 
\begin{equation*}
\left( S_{\delta }\circ \mu _{A}\right)
((xa)y)=\dbigvee\limits_{(xa)y=((((cx)b)y)a)a}\left\{ S_{\delta
}((((cx)b)y)a)\wedge \mu _{A}(a)\right\} \geq \mu _{A}(a).
\end{equation*}

This implies that $\mu _{A}((xa)y)\geq \mu _{A}(a)$ and similarly we can
show that $\gamma _{A}((xa)y)\leq \gamma _{A}(a)$. Thus $A$ is an
intuitionistic fuzzy interior ideal of $S.$

$(vi)\Longrightarrow (v):$ Let $A$ be an intuitionistic fuzzy interior ideal
of $S,$ then by Theorem \ref{intr}, $A$ is an intuitionistic fuzzy two-sided
ideal of $S$ and it is easy to observe that $A$ is an intuitionistic fuzzy
generalized bi-ideal of $S$.

$(v)\Longrightarrow (iv)$: Let $A=(\mu _{A},\gamma _{A})$ be an
intuitionistic fuzzy generalized bi-ideal of an intra-regular AG-groupoid $S$
with left identity . Let $a\in S$, then there exists $x,y\in S$ such that $%
a=(xa^{2})y.$ Now by using $(4),$ $(3)$ and $(1),$ we have 
\begin{eqnarray*}
\mu _{A}(ab) &=&\mu _{A}(((x(aa))y)b)=\mu _{A}((((ea)(xa))y)b)=\mu
_{A}((((ax)(ae))y)b) \\
&=&\mu _{A}(((a((ax)e))(ey))b)=\mu _{A}(((ye)(((ax)e)a))b) \\
&=&\mu _{A}(((ye)((ae)(ax)))b)=\mu _{A}(((ye)(a((ae)x)))b) \\
&=&\mu _{A}((a((ye)((ae)x)))b)\geq \mu _{A}(a)\wedge \mu _{A}(b).
\end{eqnarray*}

Similarly we can show that $\gamma _{A}(ab)\leq \gamma _{A}(a)\vee \gamma
_{A}(b)$ and therefore $A=(\mu _{A},\gamma _{A})$ is an intuitionistic fuzzy
bi-ideal of $S$.

$(iv)\Longrightarrow (iii):$ Let $A=(\mu _{A},\gamma _{A})$ be an
intuitionistic fuzzy bi-ideal of an intra-regular AG-groupoid $S$ with left
identity. Let $a\in S$, then there exists $x,y\in S$ such that $a=(xa^{2})y.$
Now by using $(4),$ $(1)$ and $(3),$ we have%
\begin{eqnarray*}
\mu _{A}(ab) &=&\mu _{A}(((x(aa))y)b)=\mu _{A}(((a(xa))y)b)=\mu
_{A}((by)((ea)(xa))) \\
&=&\mu _{A}((by)((ax)(ae)))=\mu _{A}(((ae)(ax))(yb))=\mu _{A}((a((ae)x))(yb))
\\
&=&\mu _{A}(((yb)((ae)x))a)=\mu _{A}(((yb)((((xa^{2})y)e)x))a) \\
&=&\mu _{A}(((yb)((y(xa^{2}))(ex)))a)=\mu _{A}(((yb)((xe)((xa^{2})(ey))))a)
\\
&=&\mu _{A}(((yb)((xe)((ye)(a^{2}x))))a)=\mu
_{A}(((yb)((xe)(a^{2}((ye)x))))a) \\
&=&\mu _{A}(((yb)(a^{2}((xe)((ye)x))))a)=\mu
_{A}((a^{2}((yb)((xe)((ye)x))))a) \\
&\geq &\mu _{A}(a^{2})\wedge \mu _{A}(a)\geq \mu _{A}(a)\wedge \mu
_{A}(a)\wedge \mu _{A}(a)=\mu _{A}(a).
\end{eqnarray*}

Similarly we can prove that $\gamma _{A}(ab)\leq \gamma _{A}(a)$ and
therefore $A=(\mu _{A},\gamma _{A})$ is an intuitionistic fuzzy right ideal
of $S$. Now by using Lemma \ref{iff}, $A=(\mu _{A},\gamma _{A})$ is an
intuitionistic fuzzy two-sided ideal of $S$.

$(iii)\Longrightarrow (ii)$ and $(ii)\Longrightarrow (i)$ are an easy
consequences of Lemma \ref{iff}.
\end{proof}

Let $A=(\mu _{A},\gamma _{A})$ and $B=(\mu _{B},\gamma _{B})$ are $IFSs$ of
an AG-groupoid $S.$ The symbols $A\cap B$ will means the following $IFS$ of $%
S$

\begin{center}
$(\mu _{A}\cap \mu _{B})(x)=\min \{\mu _{A}(x),\mu _{B}(x)\}=\mu
_{A}(x)\wedge \mu _{B}(x),$ for all $x$ in $S.$

$(\gamma _{A}\cup \gamma _{B})(x)=\max \{\gamma _{A}(x),\gamma
_{B}(x)\}=\gamma _{A}(x)\vee \gamma _{B}(x),$ for all $x$ in $S.$
\end{center}

\QTP{Body Math}
The symbols $A\cup B$ will means the following $IFS$ of $S$

\begin{center}
$(\mu _{A}\cup \mu _{B})(x)=\max \{\mu _{A}(x),\mu _{B}(x)\}=\mu _{A}(x)\vee
\mu _{B}(x),$ for all $x$ in $S.$

$(\gamma _{A}\cap \gamma _{B})(x)=\min \{\gamma _{A}(x),\gamma
_{B}(x)\}=\gamma _{A}(x)\wedge \gamma _{B}(x),$ for all $x$ in $S.$
\end{center}

\begin{lemma}
\label{fgh}Let $S$ be an intra-regular AG-groupoid with left identity and
let $A=(\mu _{A},\gamma _{A})$ and $B=(\mu _{B},\gamma _{B})$ are any
intuitionistic fuzzy two-sided ideals of $S,$ then $A\circ B=A\cap B$.
\end{lemma}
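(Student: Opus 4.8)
The plan is to establish the two inclusions $A\circ B\subseteq A\cap B$ and $A\cap B\subseteq A\circ B$ separately. For the first inclusion I would note that, by the definitions, $\mu_B\leq S_\delta$ and $\gamma_B\geq\Theta_\delta$, i.e. $B\subseteq\delta$, and likewise $A\subseteq\delta$; since the product $\circ$ is monotone in each variable with respect to the $IFS$ order (immediate from its definition), we get $A\circ B\subseteq A\circ\delta$ and $A\circ B\subseteq\delta\circ B$. By Lemma \ref{aqw} we have $A\circ\delta=A$ and $\delta\circ B=B$, hence $A\circ B\subseteq A$ and $A\circ B\subseteq B$, so $A\circ B\subseteq A\cap B$. (This half uses only that $A$ and $B$ are $IFS$s, not that they are ideals.)

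For the reverse inclusion I would take $a\in S$ and use intra-regularity to pick $x,y\in S$ with $a=(xa^{2})y$. Then, exactly as in the second computation in the proof of Lemma \ref{aqw}, the left identity together with the laws $(3)$ and $(4)$ yields
\[
a=(xa^{2})(ey)=(ye)(a^{2}x)=(aa)((ye)x)=(x(ye))(aa)=a\big((x(ye))a\big).
\]
Setting $q=(x(ye))a$, this is one admissible factorization $a=a\cdot q$, so from the definition of $\circ$ and the fact that $B$ is in particular an intuitionistic fuzzy left ideal (so $\mu_B(zw)\geq\mu_B(w)$ and $\gamma_B(zw)\leq\gamma_B(w)$),
\[
(\mu_A\circ\mu_B)(a)\geq\mu_A(a)\wedge\mu_B\big((x(ye))a\big)\geq\mu_A(a)\wedge\mu_B(a)=(\mu_A\cap\mu_B)(a),
\]
\[
(\gamma_A\circ\gamma_B)(a)\leq\gamma_A(a)\vee\gamma_B\big((x(ye))a\big)\leq\gamma_A(a)\vee\gamma_B(a)=(\gamma_A\cup\gamma_B)(a).
\]
Hence $\mu_A\circ\mu_B\supseteq\mu_A\cap\mu_B$ and $\gamma_A\circ\gamma_B\subseteq\gamma_A\cup\gamma_B$, i.e. $A\cap B\subseteq A\circ B$. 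Combining the two inclusions gives $A\circ B=A\cap B$.

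The only genuine difficulty is producing the factorization $a=a\cdot q$ in which $a$ itself is the left factor and the remaining factor ends in $a$, so that the left-ideal property of $B$ applies to it; the rearrangement recorded in the proof of Lemma \ref{aqw} does precisely this, so I would just invoke it rather than redo the manipulation. Everything else --- monotonicity of $\circ$, the reduction of the join/meet over all factorizations to the single chosen term, and the routine dual argument for the nonmembership function $\gamma$ --- requires no new ideas.
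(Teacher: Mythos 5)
Your proof is correct, and its overall skeleton matches the paper's (one inclusion by an absorption argument, the other by exhibiting a single good factorization of $a$ coming from intra-regularity), but the details are genuinely different. For $A\circ B\subseteq A\cap B$ the paper quotes Lemma \ref{as} directly: $B$ a left ideal gives $\mu _{A}\circ \mu _{B}\subseteq S_{\delta }\circ \mu _{B}\subseteq \mu _{B}$ and $A$ a right ideal gives $\mu _{A}\circ \mu _{B}\subseteq \mu _{A}\circ S_{\delta }\subseteq \mu _{A}$; you instead route this through monotonicity plus $A\circ \delta =A$, $\delta \circ B=B$ from Lemma \ref{aqw}. That is fine in the present situation, but your parenthetical claim that this half ``uses only that $A$ and $B$ are $IFS$s'' is misleading: for an arbitrary $IFS$ only $\delta \circ A\supseteq A$ and $A\circ \delta \supseteq A$ are actually forced (the equalities in Lemma \ref{aqw} silently need the ideal property, which is why the paper records the honest statement as Corollary \ref{cor} for ideals), and indeed $A\circ B\subseteq A\cap B$ can fail for arbitrary $IFS$s; so you should cite Corollary \ref{cor} and acknowledge that the two-sidedness of $A$ and $B$ is what makes this step work. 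For the reverse inclusion the paper uses the factorization $a=(ae)((xa)y)$ (via $(4)$ and $(2)$) and needs $A$ to be a right ideal and $B$ to be a two-sided ideal to get $\mu _{A}(ae)\wedge \mu _{B}((xa)y)\geq \mu _{A}(a)\wedge \mu _{B}(a)$, whereas you reuse the factorization $a=a((x(ye))a)$ from Lemma \ref{aqw} and need only the left-ideal property of $B$ (and nothing of $A$) for that half; your variant is therefore slightly more economical in hypotheses at that step, at the cost of leaning on the earlier rearrangement, while the paper's choice keeps the computation self-contained and symmetric in the way it exercises the two ideal properties.
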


\begin{proof}
Assume that $A=(\mu _{A},\gamma _{A})$ and $B=(\mu _{B},\gamma _{B})$ are
any intuitionistic fuzzy two-sided ideals of an intra-regular AG-groupoid $S$
with left identity, then by using Lemma \ref{as}, we have $\mu _{A}\circ \mu
_{B}\subseteq \mu _{A}\cap \mu _{B}$ and $\gamma _{A}\circ \gamma
_{B}\supseteq \gamma _{A}\cup \gamma _{B},$ which shows that $A\circ
B\subseteq A\cap B$. Let $a\in S,$ then there exists $x,y\in S$ such that $%
a=(xa^{2})y.$ Now by using $(4)$ and $(2),$ we have%
\begin{equation*}
a=(x(aa))y=(a(xa))(ey)=(ae)((xa)y).
\end{equation*}%
Therefore, we have%
\begin{eqnarray*}
(\mu _{A}\circ \mu _{B})(a) &=&\dbigvee\limits_{a=(ae)((xa)y)}\{\mu
_{A}(ae)\wedge \mu _{B}((xa)y)\}\geq \mu _{A}(ae)\wedge \mu _{B}((xa)y) \\
&\geq &\mu _{A}(a)\wedge \mu _{B}(a)=(\mu _{A}\cap \mu _{B})(a)
\end{eqnarray*}

and%
\begin{eqnarray*}
(\gamma _{A}\circ \gamma _{A})(a)
&=&\dbigwedge\limits_{a=(ae)((xa)y)}\left\{ \gamma _{A}(ae)\vee \gamma
_{A}((xa)y)\right\} \leq \gamma _{A}(ae)\vee \gamma _{A}((xa)y) \\
&\leq &\gamma _{A}(a)\vee \gamma _{A}(a)=(\gamma _{A}\cup \gamma _{A})(a).
\end{eqnarray*}

Thus we get that $\mu _{A}\circ \mu _{B}\supseteq \mu _{A}\cap \mu _{B}$ and 
$\gamma _{A}\circ \gamma _{B}\subseteq \gamma _{A}\cup \gamma _{B},$ which
give us $A\circ B\supseteq A\cap B$ and therefore $A\circ B=A\cap B.$
\end{proof}

The converse of Lemma \ref{fgh} is not true in general which is discussed in
the following.

Let us consider an AG-groupoid $S=\left\{ 1,2,3,4,5\right\} $ with left
identity $4$ in the following Cayley's table.

\begin{center}
\begin{tabular}{l|lllll}
. & $1$ & $2$ & $3$ & $4$ & $5$ \\ \hline
$1$ & $1$ & $1$ & $1$ & $1$ & $1$ \\ 
$2$ & $1$ & $5$ & $5$ & $3$ & $5$ \\ 
$3$ & $1$ & $5$ & $5$ & $2$ & $5$ \\ 
$4$ & $1$ & $2$ & $3$ & $4$ & $5$ \\ 
$5$ & $1$ & $5$ & $5$ & $5$ & $5$%
\end{tabular}
\end{center}

\QTP{Body Math}
Define an $IFS$ $A=(\mu _{A},\gamma _{A})$ of an AG-groupoid $S$ as follows: 
$\mu _{A}(1)=\mu _{A}(2)=\mu _{A}(3)=0.3,$ $\mu _{A}(4)=0.1$, $\mu
_{A}(5)=0.4,$ $\gamma _{A}(1)=0.2,$ $\gamma _{A}(2)=0.3,$ $\gamma
_{A}(3)=0.4,$ $\gamma _{A}(4)=0.5,$ $\gamma _{A}(5)=0.2.$ Now again define
an $IFS$ $B=(\mu _{B},\gamma _{B})$ of an AG-groupoid $S$ as follows: $\mu
_{B}(1)=\mu _{B}(2)=\mu _{B}(3)=0.5,$ $\mu _{B}(4)=0.4$, $\mu _{B}(5)=0.6,$ $%
\gamma _{B}(1)=0.3,$ $\gamma _{B}(2)=0.4,$ $\gamma _{B}(3)=0.5,$ $\gamma
_{B}(4)=0.6,$ $\gamma _{B}(5)=0.3.$ Then it is easy to observe that $A=(\mu
_{A},\gamma _{A})$ and $B=(\mu _{B},\gamma _{B})$ are an intuitionistic
fuzzy two-sided ideals of $S$ such that $(\mu _{A}\circ \mu _{B})(a)=\{0.1,$ 
$0.3,$ $0.4\}=(\mu _{A}\cap \mu _{B})(a)$ for all $a\in S$ and similarly $%
(\gamma _{A}\circ \gamma _{B})(a)=(\gamma _{A}\cap \gamma _{B})$ for all $%
a\in S$, that is, $A\circ B=A\cap B$ but $S$ is not an intra-regular because 
$3\in S$ is not an intra-regular.

An $IFS$ $A=(\mu _{A},\gamma _{A})$ of an AG-groupoid is said to be
idempotent if $\mu _{A}\circ \mu _{A}=\mu _{A}$ and $\gamma _{A}\circ \gamma
_{A}=\gamma _{A},$ that is, $A\circ A=A$ or $A^{2}=A.$

\begin{lemma}
\label{idem}Every intuitionistic fuzzy two-sided ideal $A=(\mu _{A},\gamma
_{A})$ of an intra-regular AG-groupoid $S$ is idempotent.
\end{lemma}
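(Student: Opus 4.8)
The statement to be proved is Lemma \ref{idem}: every intuitionistic fuzzy two-sided ideal $A=(\mu_A,\gamma_A)$ of an intra-regular AG-groupoid $S$ is idempotent, i.e. $A\circ A = A$. The plan is to split this into the two inclusions $A\circ A \subseteq A$ and $A\circ A\supseteq A$, and to obtain each from results already established in the excerpt. Note that the ambient hypotheses in the immediately preceding material carry a left identity, but the statement as written only assumes intra-regularity; I would work with whatever of Lemma \ref{aqw}, Lemma \ref{as} applies, and flag the left-identity hypothesis if it is actually needed.

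First I would dispose of the inclusion $A\circ A\subseteq A$. Since $A$ is an intuitionistic fuzzy two-sided ideal, it is in particular an intuitionistic fuzzy left ideal (and right ideal), so by Lemma \ref{as}$(ii)$ we have $S\circ \mu_A \subseteq \mu_A$ and $\Theta\circ\gamma_A\supseteq\gamma_A$, and also $\mu_A\circ S\subseteq\mu_A$, $\gamma_A\circ\Theta\supseteq\gamma_A$. Since $\mu_A\leq S_\delta$ pointwise (the latter is identically $1$) and dually $\gamma_A\geq\Theta_\delta$, monotonicity of $\circ$ gives $\mu_A\circ\mu_A\subseteq \mu_A\circ S_\delta\subseteq\mu_A$ and $\gamma_A\circ\gamma_A\supseteq\gamma_A\circ\Theta_\delta\supseteq\gamma_A$. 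Hence $A\circ A\subseteq A$. (Alternatively this is just the AG-subgroupoid half of Lemma \ref{as}$(i)$, since a two-sided ideal is automatically an intuitionistic fuzzy AG-subgroupoid.)

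The reverse inclusion $A\supseteq$ — that is, $A\subseteq A\circ A$ — is where the intra-regularity is genuinely used, and it is the main obstacle. For $a\in S$ pick $x,y$ with $a=(xa^2)y$. The strategy is to rewrite $a$ as a product $a = uv$ in which both factors can be bounded below by $\mu_A(a)$ using the left/right ideal properties of $A$; a convenient such factorization is the one already used in Lemma \ref{fgh}, namely $a=(ae)((xa)y)$ (obtained via $(4)$ and $(2)$). Then
\begin{equation*}
(\mu_A\circ\mu_A)(a)=\dbigvee\limits_{a=uv}\{\mu_A(u)\wedge\mu_A(v)\}\geq \mu_A(ae)\wedge\mu_A((xa)y)\geq \mu_A(a)\wedge\mu_A(a)=\mu_A(a),
\end{equation*}
where $\mu_A(ae)\geq\mu_A(a)$ uses that $A$ is an intuitionistic fuzzy right ideal and $\mu_A((xa)y)=\mu_A(((xa)y))\geq\mu_A(a)$ uses that $A$ is an intuitionistic fuzzy left ideal applied to the factorization $(xa)y$ reducing (after more left-invertive manipulation, as in Theorem \ref{intr}) to something dominating $\mu_A(a)$ — alternatively one simply invokes Lemma \ref{aqw}/Corollary \ref{cor} to note $\delta\circ A = A$ so that $\mu_A((xa)y)\geq(S_\delta\circ\mu_A)((xa)y)$ evaluated suitably. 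The dual computation gives $(\gamma_A\circ\gamma_A)(a)\leq\gamma_A(a)$. Together these yield $\mu_A\circ\mu_A\supseteq\mu_A$ and $\gamma_A\circ\gamma_A\subseteq\gamma_A$, i.e. $A\circ A\supseteq A$.

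Combining the two inclusions gives $A\circ A=A$, which is exactly idempotence of $A$; since this holds for the membership and non-membership parts simultaneously, the lemma follows. The only delicate point to watch is making sure each factor in the chosen factorization of $a$ is legitimately bounded below (resp. above) by $\mu_A(a)$ (resp. $\gamma_A(a)$) using only the left and right ideal inequalities — the left-identity $e$ is used freely in the rewriting $a=(ae)((xa)y)$, so in the final write-up I would state the lemma (consistently with the surrounding results) for an intra-regular AG-groupoid with left identity, or else replace that step with a factorization not requiring $e$.
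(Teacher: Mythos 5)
Your proposal is correct and takes essentially the same route as the paper: the paper also gets $A\circ A\subseteq A$ from Lemma \ref{as}, rewrites $a=(x(aa))y=(a(xa))(ey)=(ae)((xa)y)$ using the left identity (so your flag about needing $e$ is justified), and bounds $(\mu_A\circ\mu_A)(a)\geq\mu_A(ae)\wedge\mu_A((xa)y)\geq\mu_A(a)$, with the dual computation for $\gamma_A$. The only step to tighten is your justification of $\mu_A((xa)y)\geq\mu_A(a)$: it follows at once from the two-sided hypothesis, since the right-ideal property gives $\mu_A((xa)y)\geq\mu_A(xa)$ and the left-ideal property gives $\mu_A(xa)\geq\mu_A(a)$, so no appeal to Theorem \ref{intr} or Lemma \ref{aqw} is needed.
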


\begin{proof}
Let $S$ be an intra-regular AG-groupoid and let $A=(\mu _{A},\gamma _{A})$
be an intuitionistic fuzzy two-sided ideal of $S.$ Now for $a\in S$ there
exists $x,y\in S$ such that $a=(xa^{2})y.$ Now by using $(4)$ and $(2),$ we
have%
\begin{equation*}
a=(x(aa))y=(a(xa))(ey)=(ae)((xa)y).
\end{equation*}%
\begin{eqnarray*}
(\mu _{A}\circ \mu _{A})(a) &=&\dbigvee\limits_{a=(ae)((xa)y)}\{\mu
_{A}(ae)\wedge \mu _{A}((xa)y)\}\geq \mu _{A}(ae)\wedge \mu _{A}((xa)y) \\
&\geq &\mu _{A}(a)\wedge \mu _{A}(a)=\mu _{A}(a).
\end{eqnarray*}

This shows that $\mu _{A}\circ \mu _{A}\supseteq \mu _{A}$ and by using
Lemma \ref{as}, $\mu _{A}\circ \mu _{A}\subseteq \mu _{A}$, therefore $\mu
_{A}\circ \mu _{A}=\mu _{A}.$ Similarly we can prove that $\gamma _{A}\circ
\gamma _{A}=\gamma _{A},$ which implies that $A=(\mu _{A},\gamma _{A})$ is
idempotent.
\end{proof}

\begin{theorem}
The set of intuitionistic fuzzy two-sided ideals of an intra-regular
AG-groupoid $S$ forms a semilattice structure with identity $\delta $, where 
$\delta =(S_{\delta },\Theta _{\delta }).$
\end{theorem}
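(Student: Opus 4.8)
The plan is to show that $(\mathcal{T},\circ)$ is a commutative, associative, idempotent monoid with identity $\delta=(S_{\delta},\Theta_{\delta})$, where $\mathcal{T}$ denotes the set of all intuitionistic fuzzy two-sided ideals of $S$; this is precisely what "semilattice structure with identity $\delta$" means. Essentially every ingredient has already been established, so the argument is an assembly of Lemmas \ref{aqw}, \ref{idem} and \ref{fgh}.

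First I would check that $\mathcal{T}$ is closed under $\circ$ and contains $\delta$. For $A,B\in\mathcal{T}$, Lemma \ref{fgh} gives $A\circ B=A\cap B$, and a one-line computation shows the intersection of two intuitionistic fuzzy two-sided ideals is again one: $\mu_{A\cap B}(xy)=\mu_A(xy)\wedge\mu_B(xy)\geq\mu_A(y)\wedge\mu_B(y)=\mu_{A\cap B}(y)$, and dually for $\gamma$ and for the right-ideal condition; the same trivial check shows $\delta\in\mathcal{T}$. Then commutativity is immediate from Lemma \ref{fgh}, since $A\circ B=A\cap B=B\cap A=B\circ A$; idempotency is Lemma \ref{idem} (or again $A\circ A=A\cap A=A$); and for associativity, using closure together with $A\circ B=A\cap B$ throughout $\mathcal{T}$, we get $(A\circ B)\circ C=(A\cap B)\cap C=A\cap(B\cap C)=A\circ(B\circ C)$ for all $A,B,C\in\mathcal{T}$. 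Finally, Lemma \ref{aqw} (equivalently Corollary \ref{cor}) supplies $\delta\circ A=A=A\circ\delta$, so $\delta$ is the identity, and the four properties together give the semilattice structure.

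The main point requiring care — rather than a genuine obstacle — is associativity: the product $\circ$ is not associative on arbitrary intuitionistic fuzzy subsets of an AG-groupoid, so one cannot invoke associativity abstractly; it must be deduced on $\mathcal{T}$ from the identity $A\circ B=A\cap B$ of Lemma \ref{fgh}, which itself rests on $S$ being intra-regular with left identity. Everything else is routine bookkeeping.
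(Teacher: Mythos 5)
Your proposal is correct, and its skeleton matches the paper's: both rest on Lemma \ref{fgh} for commutativity, Lemma \ref{idem} for idempotency, and Corollary \ref{cor} (Lemma \ref{aqw}) for $\delta$ being the identity. The one place where you take a genuinely different route is associativity. The paper deduces it from the fact, recorded in the preliminaries, that the product $\circ$ on intuitionistic fuzzy subsets itself satisfies the left invertive law $(1)$: combining this with $A\circ B=B\circ A$ it writes $(A\circ B)\circ C=(B\circ A)\circ C=(C\circ A)\circ B=(A\circ C)\circ B=(B\circ C)\circ A=A\circ (B\circ C)$ --- essentially the standard observation that a commutative AG-groupoid is a semigroup --- and it does not need closure of $\mathbb{I}_{\mu\gamma}$ under $\circ$ for this step (the paper merely asserts closure). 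You instead transport everything through the identification $A\circ B=A\cap B$ and use associativity of intersection, which forces you to verify what the paper leaves as ``clearly'': that $A\cap B$ (and $\delta$) are again intuitionistic fuzzy two-sided ideals, so that Lemma \ref{fgh} may be applied at the second level, e.g.\ to $A\cap B$ and $C$. Your route is slightly more self-contained and makes the closure argument explicit, exhibiting the structure as the meet-semilattice of ideals under intersection; the paper's route is shorter because it leans on the invertive law for $\circ$, which is valid for arbitrary intuitionistic fuzzy subsets. Both are sound, and your explicit remark that Lemma \ref{fgh} needs the left identity (which the theorem statement omits but the paper's own proof also uses) is a fair caveat rather than a defect.
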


\begin{proof}
Let $\mathbb{I}_{\mu \gamma }$ be the set of intuitionistic fuzzy two-sided
ideals of an intra-regular AG-groupoid $S$ and let $A=(\mu _{A},\gamma _{A})$%
, $B=(\mu _{B},\gamma _{B})$ and $C=(\mu _{C},\gamma _{C})$ are any
intuitionistic fuzzy two-sided ideals of $\mathbb{I}_{\mu \gamma }.$ Clearly 
$\mathbb{I}_{\mu \gamma }$ is closed and by Lemma \ref{idem}, we have $%
A^{2}=A$. Now by using Lemma \ref{fgh}, we get $A\circ B=B\circ A$ and
therefore, we have%
\begin{equation*}
(A\circ B)\circ C=(B\circ A)\circ C=(C\circ A)\circ B=(A\circ C)\circ
B=(B\circ C)\circ A=A\circ (B\circ C).
\end{equation*}

It is easy to see from Corollary \ref{cor} that $\delta $ is an identity in $%
\mathbb{I}_{\mu \gamma }.$
\end{proof}

\end{document}